\documentclass[sn-mathphys,Numbered, icolw]{sn-jnl}


\usepackage{graphicx}%
\usepackage{multirow}%
\usepackage{amsmath,amssymb,amsfonts}%
\usepackage{amsthm}
\usepackage{mathrsfs}%
\usepackage[title]{appendix}%
\usepackage{xcolor}%
\usepackage{nicefrac,xfrac}
\usepackage{textcomp}%
\usepackage{manyfoot}%
\usepackage{booktabs}%
\usepackage{algorithm}%
\usepackage{algorithmicx}%
\usepackage{algpseudocode}%
\usepackage{listings}%
\usepackage{tikz}


\theoremstyle{thmstyleone}%
\newtheorem{theorem}{Theorem}
%

\theoremstyle{thmstyletwo}%
\newtheorem{remark}{Remark}%

\theoremstyle{thmstylethree}%

\raggedbottom

\usepackage[utf8]{inputenc}
\usepackage{amssymb,amsmath,amsthm}
\usepackage{mathtools,cases,csquotes}
\usepackage{cancel}
\usepackage{amsbsy,csquotes}
\usepackage{booktabs}
\usepackage{subcaption}
\usepackage{slashbox}
\usepackage{amsmath, amssymb}
\usepackage{listings}
\usepackage{empheq}
\usepackage{afterpage}
\usepackage{xcolor}
\usepackage{cleveref}
\usepackage{url}
\usepackage{booktabs}
\usepackage{stmaryrd}
\usepackage{color}
\usepackage{graphicx}
\usepackage{overpic}
\usepackage{subcaption}

\usepackage{algpseudocode}
\usepackage{algorithm}
\usepackage{lipsum}
\usepackage{ulem}
\usepackage{mathtools}

\DeclareMathOperator{\DivG}{\nabla_{\mathcal{G}} \cdot}
\DeclareMathOperator{\GradG}{\nabla_{\mathcal{G}}}

\newcommand{\R}{\mathbb{R}}

\newcommand{\dtheta}{\, \mathrm{d} \theta}

\newcommand{\ds}{\, \mathrm{d} s}
\newcommand{\dr}{\, \mathrm{d} r}

\newcommand{\E}{\mathcal{E}}
\newcommand{\V}{\mathcal{V}}

\newcommand{\p}{p}
\newcommand{\q}{q}

\newcommand{\pref}{p^{\text{ref}}}
\newcommand{\vvref}{\mathbf{v}^{\text{ref}}}

\newcommand{\NN}{\mathbf{N}}
\newcommand{\BB}{\mathbf{B}}
\newcommand{\TT}{\mathbf{T}}

\newcommand{\ww}{\mathbf{w}}

\newcommand{\vv}{\mathbf{v}}
\newcommand{\nn}{\mathbf{n}}

\newcommand{\dG}{\, \mathrm{d}\mathcal{G}}
\newcommand{\dE}{\, \mathrm{d}\mathcal{E}}
\newcommand{\dV}{\, \mathrm{d}\mathcal{V}}

\newcommand{\jump}[1]{\ensuremath{[\![#1]\!]} }

\newcommand{\pdel}[1]{\partial_{#1}}

\newcommand{\pvs}[1]{pv}
\newcommand{\norm}[2]{\Vert #1 \Vert_{#2}}

\newtheorem{thm}{Theorem}[section]
\newtheorem{lem}{Lemma}[section]

\usepackage{tikz}
\usetikzlibrary{backgrounds,patterns, shapes,calc,arrows,fit,shadows,arrows,positioning,decorations.text,mindmap}

\usepackage{todonotes}
\setlength{\marginparwidth}{2cm}

\begin{document}
\title{Directional flow in perivascular networks: Mixed finite elements for reduced-dimensional models on graphs}

\author*[1]{\fnm{Ingeborg G.} \sur{Gjerde}}\email{ingeborg@simula.no}

\author[1]{\fnm{Miroslav} \sur{Kuchta}}\email{miroslav@simula.no}

\author[1]{\fnm{Marie E.} \sur{Rognes}}\email{meg@simula.no}

\author[2]{\fnm{Barbara \sur{Wohlmuth}}}\email{wohlmuth@ma.tum.de}

\affil[1]{\orgname{Simula Research Laboratory, Oslo, Norway}}

\affil[2]{\orgname{Technical University of Munich, Germany}}
\date{\today}

\abstract{
The flow of cerebrospinal fluid through the perivascular spaces of the brain is believed to play a crucial role in eliminating toxic waste proteins. While the driving forces of this flow have been enigmatic, experiments have shown that arterial wall motion is central. In this work, we present a network model for simulating pulsatile fluid flow in perivascular networks. We establish the well-posedness of this model in the primal and dual mixed variational settings, and show how it can be discretized using mixed finite elements. Further, we utilize this model to investigate fundamental questions concerning the physical mechanisms governing perivascular fluid flow. Notably, our findings reveal that arterial pulsations can induce directional flow in branching perivascular networks.}

\maketitle

\newpage
\section{Introduction}
\label{sec:intro}

Cerebrospinal fluid flow and transport in perivascular spaces is thought to play a key role for solute influx and metabolite clearance in the brain~\cite{iliff2012paravascular, bohr2022glymphatic}. Perivascular spaces are compartments surrounding blood vessels on the brain surface and within the brain itself with the vascular wall as their inner boundary. There is substantial interest in these processes due to their association with neurodegenerative diseases such as Alzheimer's or Parkinson's diseases~\cite{tarasoff2015clearance}. Perivascular flow and transport has been linked to the cardiac rhythm~\cite{iliff2013cerebral, mestre2018flow, bedussi2018paravascular, rennels1990rapid} and to other vasomotion patterns~\cite{van2020vasomotion, munting2023spontaneous, bojarskaite2023sleep}. However, our understanding of the drivers and directionality of these flows remains incomplete. 
    
In recent years, computational modelling has emerged as a new approach for studying cerebrospinal fluid flow and transport in and around the brain. Efforts to this end included the development of hydraulic network models, with an emphasis on perivascular resistance \cite{tithof2019hydraulic, faghih2018bulk, boster2022sensitivity}. These steady-state Poiseuille flow-type models typically use pressure gradients to drive fluid flow. In contrast, flow driven by cardiac-induced arterial pulsations or vasomotion calls for pulsatile network models~\cite{rey2018pulsatile, daversin2022geom}. Additionally, vessel wall movement in the associated asymmetric domains can give rise to complex flow patterns~\cite{carr2021peristaltic, daversin2020mechanisms}. Thus, care is required both for the derivation and numerical discretization of reduced network models.  

In this work, we introduce and study network models of pulsatile flow in perivascular spaces from both a mathematical and a biological perspective. Our main questions and findings are therefore two-fold. Mathematically, we rigorously derive a graph-based model for fluid flow in perivascular networks that incorporates pulsatile wall motion, axial velocity gradients, perivascular porosity, and bifurcation conditions. The resulting Stokes--Brinkman-type system of equations takes the form of a saddle-point problem describing the perivascular cross-section fluxes and pressures. Key questions relate to the existence and uniqueness of solutions and approximations to these equations, and in particular to the approximation properties as the complexity and cardinality of the network increases. Here, we analyze and compare several continuous and discrete formulations, and demonstrate stability and robustness with respect to appropriately weighted norms. The discretized models have a low computational cost, making it straightforward to simulate perivascular flow in large networks. 

Building on this, we simulate perivascular fluid flow due to arterial wall motion in synthetic and image-based vascular networks. Our simulations give rise to the following insights 
\begin{itemize}
\setlength{\itemsep}{0em}
     \item \textit{Network branching and heterogeneity can induce directional net flow.} Modelling the perivascular spaces as annular spaces surrounding an arterial tree, with an open inlet and open outlets, we find that spatially-uniform pulsations of the vascular wall can drive directional flow. Such flow is not expected nor observed for non-bifurcating vessels. The volume of net flow increases with the number of generations in the network and persists for arbitrary wave frequencies. These observations suggest that the network complexity contributes to directional perivascular flow. 
   \item \textit{Continuous, closed perivascular networks are not conducive to net flow. Directional flow is contingent on perivascular-tissue connections.} Modelling the perivascular spaces as a continuous, closed network extending from arteries, to capillaries, to veins yields negligible net flow. The high resistance of the capillary bed and lack of other fluid inlet and outlets, seems to disrupt the interaction between the evolving pressure and resistance fields, and hence inhibits net flow generation. 
\end{itemize}

The article is structured as follows. In Section~\ref{sec:model}, we introduce geometrical and physiological assumptions and present a pulsatile perivascular network flow model. The detailed derivation of the network model from the full three-dimensional model is given in Appendix~\ref{sec:derivation}. The mathematical analysis is contained in Section~\ref{sec:graph}. Here, we formulate primal and dual mixed formulations of the network model and show well-posedness. We also introduce primal and dual finite element approximations and demonstrate uniform stability with respect to the problem parameters and network topology. We apply these methods to study perivascular pumping in physiological perivascular networks in Section \ref{sec:pvs-results}, and discuss these findings in Section \ref{sec:discussion}.

\section{Modelling pulsatile fluid flow in porous perivascular networks}
\label{sec:model}

This section introduces a rigorously-derived network model describing pulsatile flow of an incompressible fluid in porous perivascular spaces. The geometry, the original 3D model equations, the (1D) network model and its underlying assumptions, and a discussion of the resistance in relation to perivascular shape is included. The derivation of the network model is further detailed in~Appendix \ref{sec:derivation}.

\subsection{Perivascular geometry}
\label{sec:model-geom}

We model the PVS as a network of flow channels (\emph{branches}) surrounding the vasculature, described by a graph $\mathcal{G}$ representing the (peri)vascular centerlines and the cross-section shapes of the PVS (\Cref{fig:domain}). The graph is kept fixed in time, while the cross-section shape is dynamic, allowing fluid to be pushed in, through, and out of the PVS.

\subsubsection{Perivascular network structure}
\label{sec:geom-network}

The network itself is represented by an oriented spatial graph $\mathcal{G} = (\V, \E)$, with $m$ (graph) vertices $\V =\{\vv_1, ..., \vv_m \}$ for $\vv_j \in \R^3$ ($j = 1, \dots, m$) and $n$ (graph) edges $\E = \{\Lambda_1, \Lambda_2, ..., \Lambda_n \}$ connecting these vertices. We define each edge $\Lambda_i = \{\boldsymbol{\lambda}_i(s) \} \subset \mathbb{R}^3$ ($i = 1, \dots, n$) as a $C^2$-regular curve parameterized by $\boldsymbol{\lambda}_i(s)$ for $s \in (0, \ell_i)$; letting $\vert \boldsymbol{\lambda}_i'(s) \vert=1$ ($\vert \cdot\vert$ being the Euclidean norm) so that $s$ coincides with the arc-length of the curve, and $\ell_i > 0$ denotes the edge length. Moreover, if $\Lambda_i$ connects from $\vv_j$ to $\vv_k$, $\boldsymbol{\lambda}_i(0) = \vv_j$, and $\boldsymbol{\lambda}_i(\ell_i) = \vv_k$. For each vertex $\vv_j \in \V$, we denote by $E(\vv_j)$ the set of edges connected to $\vv_j$, and by $E_\text{in}(\vv_j)$ and $E_\text{out}(\vv_j)$ the edges going into and out of vertex $\vv_j$, respectively. Note that the domains $\Lambda_i$ are open, meaning that $\mathcal{E}$ and $\mathcal{V}$ are disjoint. We denote by $\Lambda \subset \R^3$ the extension of the graph,
%
%
\begin{equation*}
  \Lambda = \E \cup \V,
\end{equation*}
which is the geometric domain containing all edges and vertices. The set of vertices $\mathcal{V}$ is split into \textit{internal vertices} $\mathcal{I}$ and \textit{boundary vertices} $\partial \mathcal{V}$. By definition, each internal vertex is connected to two or more edges, while each boundary vertex is connected to a single edge.

\begin{figure}
 \vspace{5em}
     \begin{subfigure}{0.3\textwidth}
\begin{overpic}[width=.75\textwidth]{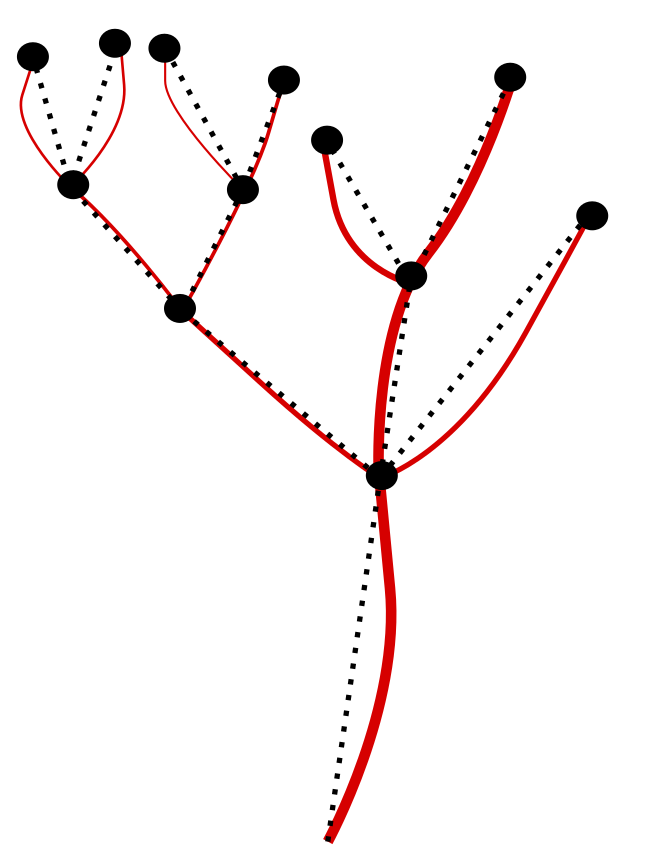}
\put(42, 3){$\textcolor{black}{\vv_j \in \partial \V}$}
\put(46, 20){$\textcolor{black}{\Lambda_i \in \mathcal{E}}$}
\put(48, 40){$\textcolor{black}{\vv_k \in \mathcal{I}}$}
\end{overpic}
    \caption{Graph structure of flow channels}
    \end{subfigure}
  \hspace*{1em}
     \begin{subfigure}{0.25\textwidth}
\begin{overpic}[width=.7\textwidth]{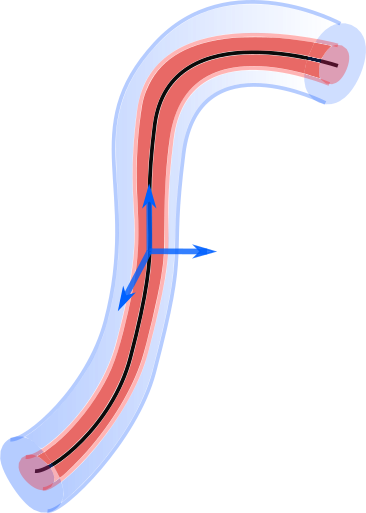}
\put(35,60){$\textcolor{blue}{\TT}$}
\put(43,48){$\textcolor{blue}{\NN}$}
\put(14,40){$\textcolor{blue}{\BB}$}
\put(28,10){$\textcolor{black}{\Lambda_i}$}
\end{overpic}
 \label{fig:domain-full}
 \caption{PVS channel geometry}
 \end{subfigure}
  \begin{subfigure}{0.3\textwidth}
   \begin{overpic}[width=.85\textwidth]{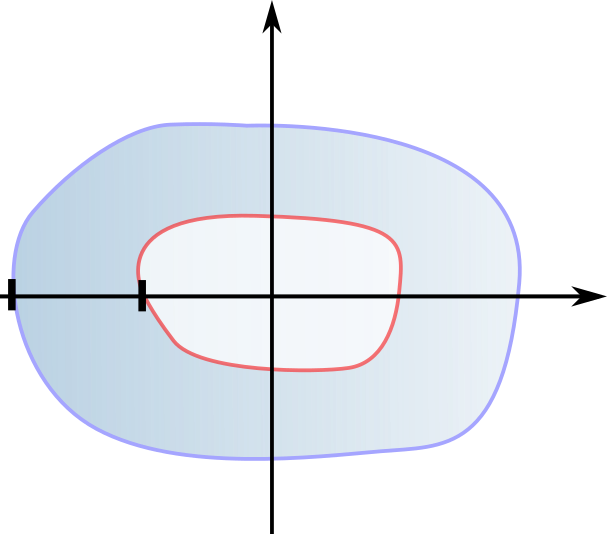}
\put(82,52){$\textcolor{blue}{\Gamma_2}$}
\put(60,52){$\textcolor{red}{\Gamma_1}$}
\put(25,28){$R^1$}
\put(0,28){$\textcolor{black}{R^2}$}
\put(70,5){$\textcolor{black}{C}$}
\end{overpic}
  \caption{Channel cross-section shape}
    \end{subfigure}          
 \caption{The perivascular space consists of a network of annular flow channels surrounding the vasculature. We organize the channels as a graph $\mathcal{G}=(\mathcal{V}, \mathcal{E})$, with internal vertices $\mathcal{I}$ and boundary vertices $\partial \V$. Each branch $i$ of the network consists of annular generalized cylinders with centerline $\Lambda_i$. The channel cross-sections $C_i(s,t)$ are characterized by a given inner radius $R^1(s,\theta,t)$ and outer radius $R^2(s,\theta,t)$.}
 \label{fig:domain}
\end{figure}

\subsubsection{Perivascular channels and cross-section shape}
\label{sec:channel}

Equipped with the graph representation of the perivascular network, we now define the PVS itself by also describing the cross-sections $C$ along the
centerlines $\boldsymbol{\lambda}$ via a suitable polar coordinate system (the Frenet-Serrat frame~\cite{generalized-cylinders}). Motivated by the physiology at hand, we assume that the cross-sections of each PVS branch can be well represented by a \textit{generalized annular} or \textit{annular-like} domain described by inner and outer curves, representing the interface towards the blood vessel and surrounding tissue, respectively (also see~\Cref{fig:domain}).

More precisely, consider first a single branch with centerline $\Lambda_i$ and its Frenet-Serrat frame $\mathbf{T}_i, \mathbf{N}_i, \mathbf{B}_i$ (representing the tangent, normal and binormal directions). We then define the channel $\Omega_i = \Omega_i(t)$ by the open domain
\begin{equation}
  \Omega_i = \{ \boldsymbol{\lambda_i}(s)  + r \cos(\theta) \NN_i(s) + r\sin(\theta) \BB_i(s), 0 < s < \ell_i, 0 < \theta \leqslant 2\pi,
  R^{1}_i \leqslant r \leqslant R^2_i \},
  \label{eq:gencyl}
\end{equation}
where $r=r(s)$ and $\theta=\theta(s)$ are the cylindrical coordinates of the local coordinate system defined by vectors $\NN_i(s)$ and $\BB_i(s)$, and $R^1_i = R^1_i(s,\theta,t)$ and $R^2_i = R^2_i(s, \theta,t)$ denotes the inner and outer "radius", respectively. We emphasize that these radii are allowed to vary along the centerline (with $s$) and angularly (with $\theta$) and thus should be interpreted as generalized radii and the resulting domain as a generalized annular cylinder. The cross-section $C_i = C_i(t)$ of the channel (varying along with $s$) is now given by:
\begin{equation}
  C_i = \{ \boldsymbol{\lambda}_i  + r \cos(\theta) \NN_i + r\sin(\theta) \BB_i,
  0 < \theta \leqslant 2 \pi, \, R_i^{1} \leqslant r \leqslant R_i^{2}  \}.
  \label{eq:cross-section}
\end{equation}
We let $A_i = \vert C_i \vert$ denote the cross-section area. The inner and outer lateral boundaries of $\Omega_i$ are labeled $\Gamma^1_i$ and $\Gamma^2_i$, respectively, and we set $\Gamma_i = \Gamma^1_i \cup \Gamma^2_i$.

Finally, we construct the full perivascular space as the union of the separate channels $\Omega = \cup_{i=1}^n \Omega_i$, with $\Omega_i$ defined by \eqref{eq:gencyl}. We designate $\Gamma = \cup_{i=1}^n \Gamma_i$ to be its lateral boundary. Moreover, we define the perivascular cross-section area $A$, cross-section $C$, inner and outer radii $R^1$ and $R^2$, edge-wise by  
\begin{equation*}
  A |_{\Lambda_i} = A_i, \quad
  C |_{\Lambda_i} = C_i, \quad
  R^1 |_{\Lambda_i} =R_i^1, \quad
  R^2 |_{\Lambda_i} =R_i^2.
\end{equation*}

\subsection{Stokes--Brinkman perivascular flow equations (3D)}
\label{sec:model-full}

Consider the flow of an incompressible, viscous fluid in a saturated porous domain $\Omega \subset \R^3$ representing the PVS with porosity $\varphi \in (0, 1]$. The porosity describes the pore space accessible to the fluid; with $\varphi = 1$ corresponding to a non-porous/open/unrestricted domain. Let $\vvref$ denote the fluid velocity and $\pref$ a scaled fluid pressure (i.e.~the pressure divided by the fluid density) solving the following Stokes--Brinkman system \cite{whitaker1999theory} of time-dependent partial differential equations (PDEs) over $\Omega$:
\begin{subequations}
  \begin{align}
    \label{eq:stokes:a}
    \partial_t \vvref - \frac{\nu}{\varphi} \Delta \vvref+ \frac{\nu}{\kappa} \vvref + \nabla \pref &= 0, \\
    \nabla \cdot \vvref &= 0 . 
  \end{align}
  \label{eq:stokes}%
\end{subequations}%
In~\eqref{eq:stokes}, $\nu$ is the kinematic fluid viscosity, and $\kappa$ the permeability of the domain (typically depending on $\phi$). For non-porous/open domains, we have $\varphi=1$ and $\kappa \rightarrow \infty$, in which case \eqref{eq:stokes:a} simplifies to the momentum equation of the time-dependent Stokes equations:
\begin{equation*}
  \partial_t \vvref - \nu \Delta \vvref +  \nabla \pref = 0 \quad \text{ in } \Omega.
\end{equation*} 

We augment~\eqref{eq:stokes} with mixed boundary conditions as
follows. First, we introduce the stress
$\boldsymbol{\sigma}^{\text{ref}}_{\nn}$ defined relative to any
interface, with $\nn$ as the (outward pointing) unit normal vector, by:
\begin{align}
  \boldsymbol{\sigma}^{\text{ref}}_{\nn}(\vv, \p) = (\frac{\nu}{\varphi} \nabla \vv - \p I) \cdot \nn .
\end{align}
As boundary conditions at the domain inlets and outlets, we prescribe a given traction:
\begin{equation}
  \boldsymbol{\sigma}^{\text{ref}}_{\nn}(\vvref,\pref) = \tilde{p}^{\text{ref}} \nn \qquad \text{ at } \partial \Omega \setminus \Gamma,
  \label{eq:boundary-condition}
\end{equation}
which allows us, e.g., to define a given pressure drop over the length of the domain. For the sake of simplicity, we assume $\tilde{p}^{\text{ref}}$ to be constant at each inlet and outlet. Next, at the inner and outer lateral boundaries, we prescribe a given fluid velocity. Let $\ww$ denote the normal wall speed defined by the rate of change of inner and outer radius in the normal direction:
\begin{align}
\ww =  \begin{cases}
  \partial_t {R}^1 \nn \text{ on } \Gamma^{1}, \quad &\text{(inner wall movement)}\\
  \partial_t {R}^2 \nn \text{ on } \Gamma^{2}. \quad &\text{(outer wall movement)}
\end{cases}
\label{eq:wallspeed}
\end{align}
In particular, we assume the inner and outer radii are known at each time point. With this in hand, we then set the fluid velocity $\vvref$ to match the wall velocity $\ww$ on the lateral boundaries:
\begin{align}
\vvref = \ww \text{ on } \Gamma.
\end{align}
In the simulation scenarios of \Cref{sec:pvs-results}, we consider pulsating inner wall displacements -- with $R^1$ varying in time ($R^1=R^1(s,\theta,t)$) while $R^2$ is fixed in time ($R^2=R^2(s,\theta,t=0)$). The wall movement $\ww$ is then given by experimental data; or it may be calculated using a blood flow model that accounts for arterial wall displacement~\cite{formaggia2003one}.

\subsection{Stokes--Brinkman perivascular network equations}
\label{sec:model-network}

In this section, we introduce a network model for pulsatile perivascular flow; that is,~a geometrically-reduced model approximation to the Stokes--Brinkman flow equations~\eqref{eq:stokes} tailored to perivascular spaces. The detailed model derivation is available in Appendix \ref{sec:derivation}. The main ideas are as follows.

First, we make the following assumptions on each centerline $\Lambda_i$:
\begin{subequations}
    \begin{align}
    p^{\text{ref}}(r, \theta; s, t) &= p^{\text{ref}}(s, t) && s \in \Lambda_i, t > 0, (r, \theta) \in C_i(s, t) \label{eq:assumption-p},   \\
    v_s^{\text{ref}}(r, \theta; s, t) &=\hat{v}_s^{\text{ref}}(s,t) v^{vp}(r,\theta) && s \in \Lambda_i, t > 0, (r, \theta) \in C_i(s, t), \label{eq:assumption-v} 
    \end{align} 
\end{subequations}
where $v_s^{\text{ref}}$ denotes the axial component of $\vv^{\text{ref}}$.
The first assumption states that the pressure is constant along each cross-section. The second states that the velocity admits a certain separation of variables, where $v^{vp}$ is the velocity profile associated with a unit pressure drop in a pipe with cross-section $C(s)$, and $\hat{v}_s^{\text{ref}}$ gives a time-dependent scaling of this profile in the axial direction.

With these assumptions in hand, the full model equations can be integrated over the cross-section, and the derivatives moved out of the integral, yielding a one-dimensional model posed via a \textit{cross-section flux} $\q = \q(s, t)$ and the \textit{cross-section pressure} $\p = \p(s, t)$, defined as follows:
\begin{gather}
\begin{aligned}
\q=\int_{C} v_s^{\text{ref}} r \dr \dtheta,
\quad \p=\int_{C} \pref \, r \dr \dtheta.
\end{aligned}
\end{gather}
Here, $q$ and $p$ are defined by their restriction to each centerline $\Lambda_i$ and cross-section $C_i$; that is, $q=q_i$ and $p=p_i$ on $\Lambda_i$. The resulting model is a time-dependent Stokes--Brinkman equation solving for the cross-section pressure $\p$ and cross-section flux $q$. For each centerline $\Lambda_i$, the model reads: 
\begin{subequations}
  \begin{align}
    \partial_t \q + \mathcal{R}\q - \nu_{\text{eff}} \partial_{ss} \q +  \partial_s \p  &=0 && \text{ on } \Lambda_i,\label{eq:1d-pvs-1}\\
    \partial_s \q &= \partial_t A && \text{ on } \Lambda_i,\label{eq:1d-pvs-2}
  \end{align}
  \label{eq:1d-pvs}
\end{subequations}
where $\nu_{\text{eff}}(s,t)=\nu/(A (s, t)  \varphi)$. Physically, the source term $\partial_t A$ accounts for displacement of the fluid due to wall motion. The resistance $\mathcal{R}$ is a lumped parameter varying axially and in time,
\begin{align}
 {\mathcal{R}}(s,t)  = \frac{\nu}{q^{vp}(s, t)}+ 2\frac{\nu}{\kappa}. \label{eq:resistance}
\end{align}
Here,
\begin{align}
    q^{vp} = \int_{C} v^{vp}(r,\theta) r\dr\dtheta ,
\end{align}
is the cross-section flux associated with $v^{vp}$ in \eqref{eq:assumption-v}; the next section will show how $v^{vp}$ can be computed for any cross-section $C$.

It remains to specify conservation or continuity conditions at the (peri)vascular junctions. At each internal vertex $\vv_j \in \mathcal{I}$, we assume the pressure $p$ to be continuous, and in addition impose conservation of mass in terms of the flux $q$,
\begin{align}
\jump{q}_j=0 \text{ at } \vv_j \in \mathcal{I}.
\end{align}
Here, $\jump{q}_j$ is the generalized jump of $q$ at vertex $v_j$,
\begin{align}
\jump{q}_j = \sum_{\Lambda_i \in E_\text{in}(\vv_j)} q_i(\vv_j)-\sum_{\Lambda_i \in E_\text{out}(\vv_j)} q_i(\vv_j). \label{eq:jump}
\end{align}
At each boundary vertex $\vv_j \in \partial \V$, we assign an axial traction condition corresponding to a cross-section average of the original boundary condition~\eqref{eq:boundary-condition}:
\begin{align}
    \nu_{\text{eff}}\partial_s q- p = \tilde{p}^{\text{ref}} \text{ at } \vv_j.
\end{align}

Comparing the three-dimensional reference model \eqref{eq:stokes} with the network equations~\eqref{eq:1d-pvs}, we see that the axial dissipation term $\Delta v_s^{\text{ref}}$ decompose into two parts: $\partial_{ss} \q$ and $\mathcal{R} \q$. The first term, $\partial_{ss} q$, accounts for viscous dissipation of energy due to changes in the axial flow speed. In our applications, this term is generally small. In fact, it is nonzero only in specific cases of pulsatile flow. The second term, $\mathcal{R} q$, accounts for (i) energy dissipation due to the no-slip boundary condition on the inner and outer walls and (ii) resistance due to the pore network. In our applications, this term is typically large. The contribution of the no-slip condition to the network resistance gives rise to the following remark.
\vspace{0.5em}
\begin{remark}[Both Stokes flow and Stokes--Brinkman flow yield Stokes--Brinkman type network models]
Consider the network resistance $\mathcal{R}$ defined by~\eqref{eq:resistance} as the sum of two contributions: the resistance inversely associated with the characteristic cross-section velocity profile $v^{vp}$ and the resistance due to the pore network. For open (non-porous) channels, $\kappa \rightarrow \infty$; thus the latter contribution vanishes. The first term remains, meaning that the network model corresponding to Stokes flow still has a non-negative resistance $\mathcal{R}$. This resistance stems from the no-slip boundary condition on each cross-section, and depends on the shape and size of these through $v^{vp}$.     
\end{remark}

\subsection{Impact of perivascular shape and size on the resistance}
\label{sec:model-resistance}

Consider the flow driven by a constant pressure drop through a domain with constant cross-section $C$. Inserting the separation of variables $v_s^{\text{ref}}=\hat{v}_s^{\text{ref}}(s,t) v^{vp}(r,\theta)$ into the Stokes--Brinkman equations \eqref{eq:stokes}, we find that the velocity profile $v^{vp}$ associated with a cross-section $C$ solves
\begin{align*}
  -\frac{1}{\varphi} \Delta v^{vp}+\frac{1}{\kappa} v^{vp} &= -1 \quad && \text{ on } C, \\
  v^{vp} &= 0 \quad && \text{ on } \partial C.
\end{align*}
After solving this either analytically or numerically, one can compute the velocity profile cross-section flux $q^{vp}$ and hence the resistance $\mathcal{R}$ \eqref{eq:resistance}.

The resistance thus depends both on the shape and size of $C$. Their influence can be separated as follows. Let $\tilde{C}$ denote the non-dimensionalized cross-section, i.e. $C$ scaled so that it has unit inner radius. Letting $\tilde{\mathcal{R}}$ denote the associated resistance, one then has ~\cite{tithof2019hydraulic}
\begin{align*}
\mathcal{R} = \tilde{\mathcal{R}}/ (R^1)^4,
\end{align*}
where the numerator $\tilde{\mathcal{R}}$ only depends on the shape of the domain $C$. In our computations, we typically assume the shape of $C$ is fixed in time, meaning that the time-dependency of $\mathcal{R}$ enters through the denominator $(R^1)^4$.

\label{sec:num-resist}
\begin{figure}[h]
\begin{subfigure}{0.45\textwidth}
  \centering
  \includegraphics[width=0.99\textwidth]{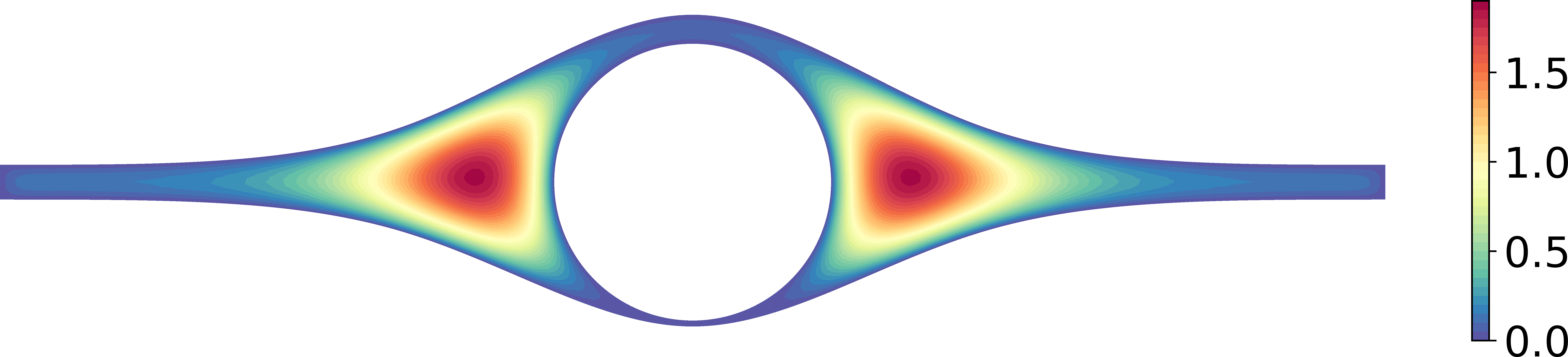}
  \caption{Idealized arterial PVS}
\end{subfigure}
\hfill
\begin{subfigure}{0.45\textwidth}
  \centering
  \includegraphics[width=0.99\textwidth]{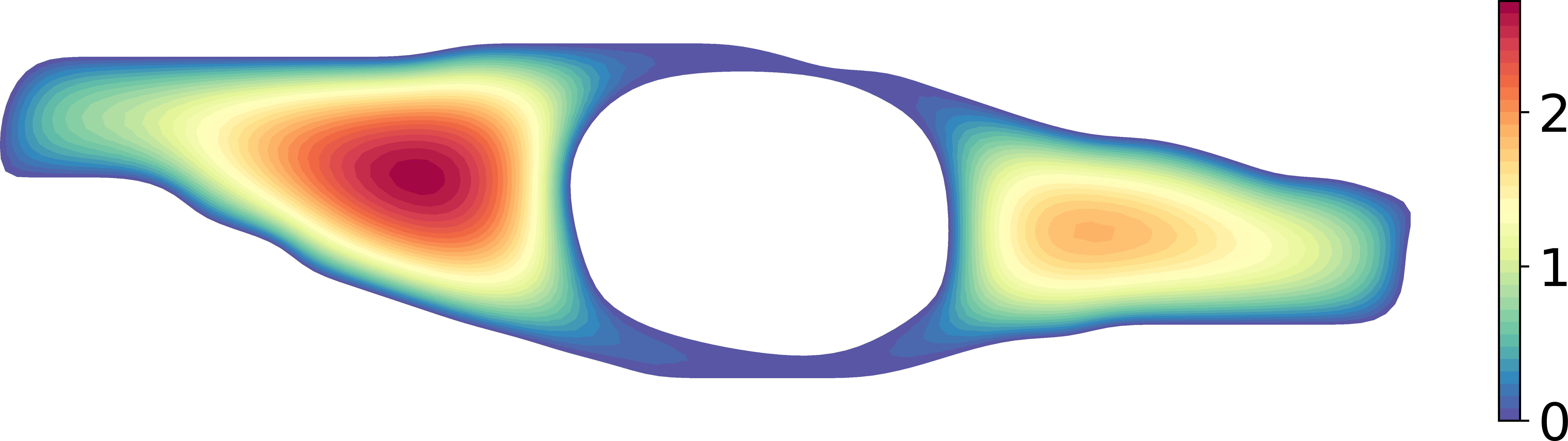}
  \caption{Image-based arterial PVS}
\end{subfigure}
\vspace{3em}
\begin{subfigure}{0.45\textwidth}
  \centering
  \includegraphics[width=0.99\textwidth]{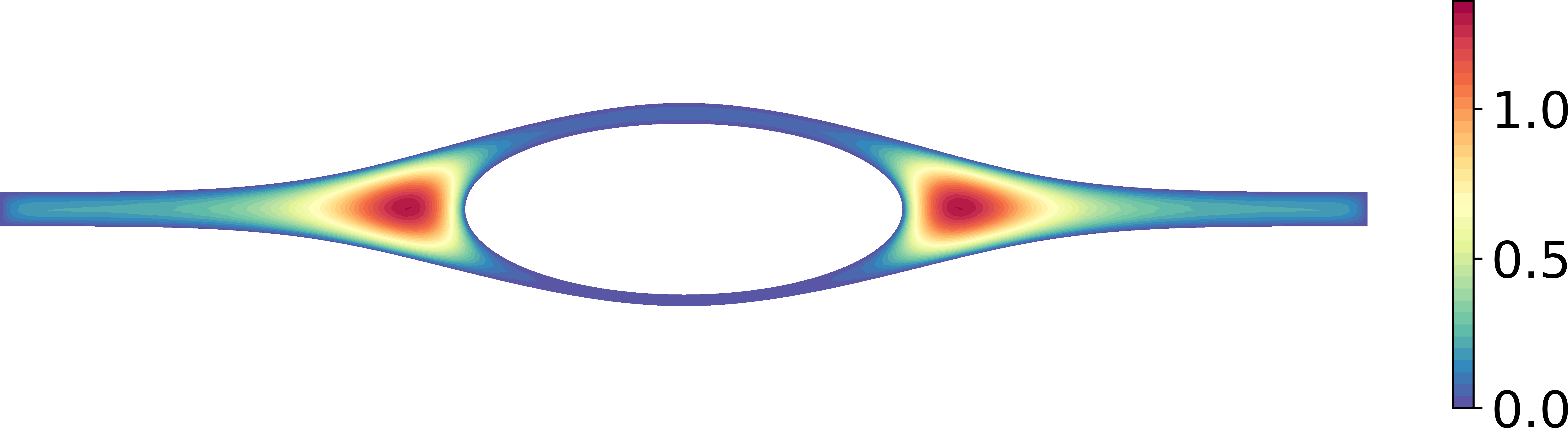}
  \caption{Idealized venous PVS}
\end{subfigure}
\hfill
\begin{subfigure}{0.45\textwidth}
  \centering
  \includegraphics[width=0.99\textwidth]{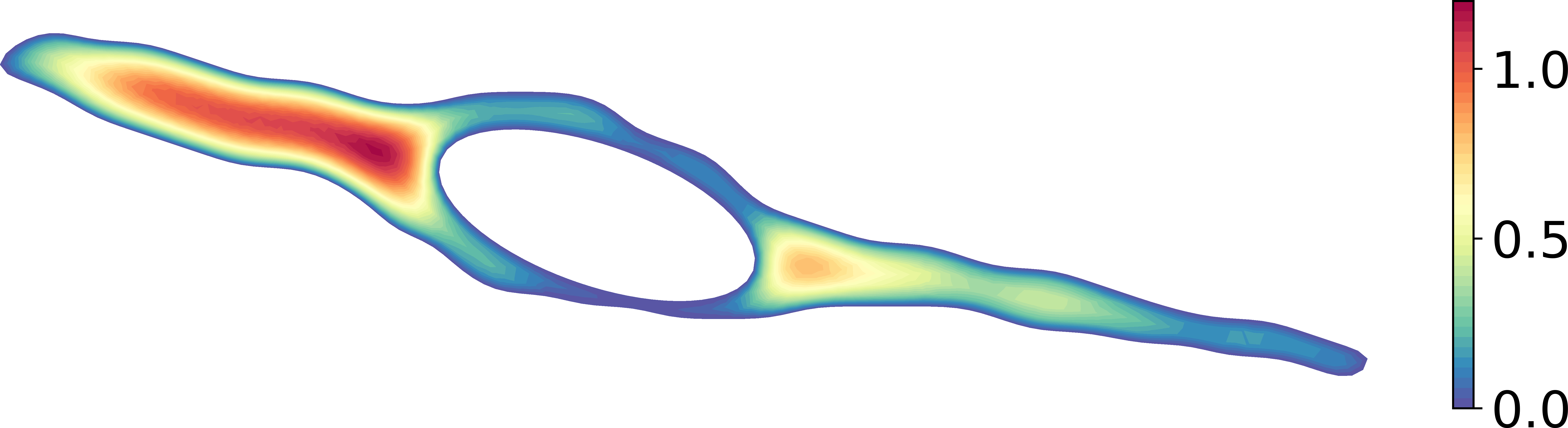}
  \caption{Image-based venous PVS}
\end{subfigure}
\caption{Velocity profile $v^{vp}$ associated with idealized and image-based pial PVS cross-section shapes. The top and bottom rows show shapes associated with arterial and venous PVSs, respectively. We see that the asymmetry of the image-based pial artery PVS yields an increase in the velocity profile magnitude. We therefore expect a considerably lower resistance offered by this domain.}
\label{fig:cross-sections}
\end{figure}
\begin{table}
\begin{subtable}{0.99\textwidth}
\centering
\hfill
\begin{tabular}{l | c c }
  Domain & $\mathcal{R}_{R^1=1\text{mm}^2}$  & $\mathcal{R}_{A=100\text{mm}^2}$  \\
  \midrule
  Idealized arterial PVS      &  3.7e-03 & 1.2e-05 \\
  Image-based arterial PVS    &  3.5e-04 & 7.3e-06 \\
  Idealized venous PVS        &  2.0e-03 & 1.9e-05 \\
  Image-based venous PVS      &  3.5e-04 & 1.8e-05 
\end{tabular}
\hfill
\end{subtable}
\caption{The resistance parameter $\mathcal{R}$ computed for the domains shown in Figure \ref{fig:cross-sections}. The resistance parameter was computed using a reference radius $R^1=1$mm (middle column) and a reference area $A=100$mm$^2$ (right column). We observe that the image-based pial artery PVS yields substantially lower resistance than its idealized counterpart, even when the cross-section areas for each domain are normalized.}
\label{tab:resistance}
\end{table}
In Figure \ref{fig:cross-sections}, we show the velocity profile $v^{vp}$ computed on idealized and image-based cross-sections of pial arteries and veins, using in-vivo human image data as in~\cite{vinje2021brain, bedussi2018paravascular}. In both cases, we assume that the domain is open, i.e., that $\varphi=1$. Table \ref{tab:resistance} shows the resistance parameters associated with each cross-section. The middle column gives the resistance values when the inner radius is scaled so that $R_1=1$mm.  Interestingly, the image-based periarterial resistance is up to an order of magnitude lower than the resistance computed using the idealized geometries. This observation can be attributed to the effects of cross-section asymmetry, as highlighted by \citet{tithof2019hydraulic}. However, resistance also decreases with cross-section area. To isolate the effect of asymmetry, we therefore show in the right-most column the resistance for cross-sections with their area normalized to 100 mm$^2$. We still observe a close to 50$\%$ lower resistance. 

\begin{figure}
\begin{subfigure}{0.45\textwidth}
\includegraphics[width=0.99\textwidth]{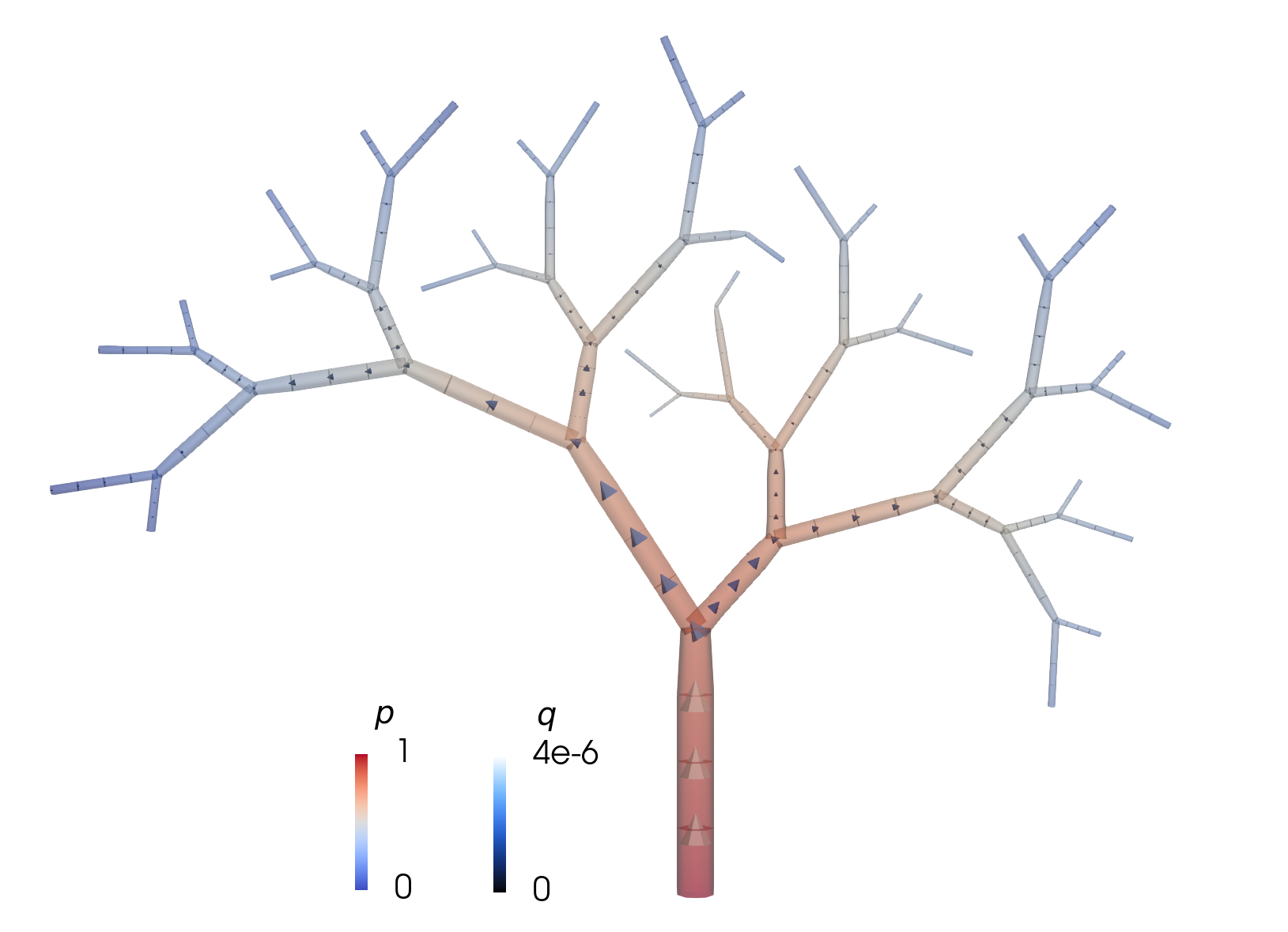}
\caption{Idealized cross-section}
\end{subfigure}
\begin{subfigure}{0.45\textwidth}
\includegraphics[width=0.99\textwidth]{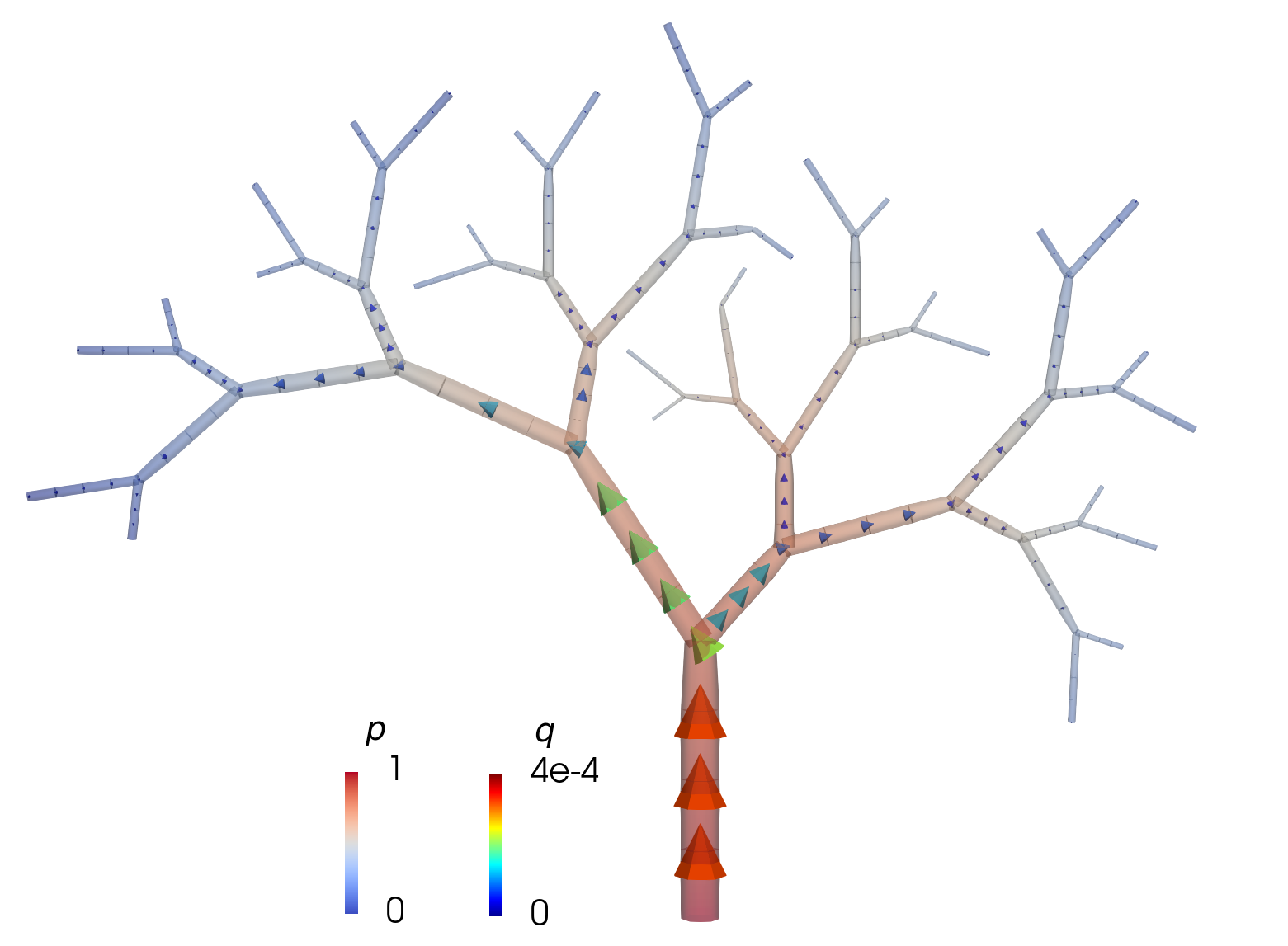}
\caption{Image-based cross-section}
\end{subfigure}
\caption{Pressure $p$ and cross-section flow $q$ due to a fixed pressure drop through an arterial tree, computed using resistance due to idealized (left) and image-based (right) arterial PVS cross-sections. Due to the lower resistance in the image-based cross-sections, the pressure drop in this tree yields a larger cross-section flux.}
\label{fig:alex-tree}
\end{figure}
Figure \ref{fig:alex-tree} shows simulation results using the image-based and idealized resistance parameters to compute the flow induced by a constant pressure gradient over an arterial tree\footnote{The arterial tree was generated using \url{https://gitlab.com/ValletAlexandra/NetworkGen/}}. We observe that the lower resistance associated with the image-based cross-section yields an increase in cross-section flow by two orders of magnitude. To obtain high accuracy results from 1-dimensional models, a careful choice of the resistance parameter is thus mandatory. 

\section{Uniform well-posedness and approximation of Stokes--Brinkman network models}
\label{sec:graph}

In this section, we focus on mathematical and numerical properties of the Stokes--Brinkman perivascular network equations~\eqref{eq:1d-pvs}. To facilitate the analysis, we introduce a graph calculus-based formulation of our network model, and we therefore first define some general concepts from graph calculus~\cite{friedman2004calculus} in \Cref{sec:graph-calc}, before presenting the abstract model formulation in~\Cref{sec:graph-calculus-model}. The well-posedness and stability of primal and dual formulations of this model are studied in \Cref{sec:graph-analysis-primal} and \ref{sec:graph-analysis-dual}. Importantly, we show that the formulations are uniformly stable with respect to the network topology in terms of the number of bifurcations. In \Cref{sec:graph-conv}, we compare and evaluate numerical properties of the primal and dual discretizations. Both methods converge with respect to the meshsize $h$. The discretizations were implemented in FEniCS \cite{AlnaesBlechta2015a}, using graphnics \cite{gjerde-joss} to construct the jump conditions and fenics$\_$ii \cite{kuchta2021assembly} to assemble the resulting block matrices.



\subsection{Graph calculus and graph finite elements}
\label{sec:graph-calc}

Consider an oriented spatial graph $\mathcal{G} = (\V, \E)$ with
vertices $\V =\{\vv_1, ..., \vv_m \}$ for $\vv_j \in \R^3$ ($j = 1,
\dots, m$) and edges $\E = \{\Lambda_1, \Lambda_2, ..., \Lambda_n \}$
parametrized by $s$. Let $C^{k}(\E)$ denote the space of functions
that are $k$-times continuous on each curve $\Lambda_i$. Further, let
$L^2(\V)$ denote the set of functions that are finite on each $\vv_j
\in \V$.

\subsubsection{Graph gradient and divergence}
\label{sec:graph-derivatives}

We define the \textit{graph gradient} $\GradG : C^{k}(\E) \rightarrow C^{k-1}(\E)$ by
\begin{equation*}
  \GradG p = \partial_s p \text{ on } \mathcal{E},
\end{equation*}
and a \textit{graph divergence} $\DivG: C^k(\E) \rightarrow C^{k-1}(\E) \times L^2(\V)$ by
\begin{equation*}
  \DivG q =
  \begin{cases}
    \partial_s q \text{ on } \mathcal{E}, \\
    \jump{q}_j \text{ on } \vv_j \in \mathcal{V},
  \end{cases}
\end{equation*}
where $\jump{q}_j$ is the jump of $q$ defined in \eqref{eq:jump}. We also define the \textit{edge Laplacian} $\Delta_{\mathcal{E}} :  C^{k}(\E) \rightarrow C^{k-2}(\E)$ by
\begin{equation*}
  \Delta_{\mathcal{E}} p = \partial_{ss} p \text{ on } \mathcal{E}.
\end{equation*}
Formally, the gradient and edge Laplacian map functions from $\mathcal{E}$ to $\mathcal{E}$. The divergence maps functions from $\mathcal{E}$ to $\mathcal{G}$, where $\mathcal{G}$ consists of vertices and edges. These operators reflect the mixed-dimensional structure of the network (consisting of one-dimensional edges connected by zero-dimensional vertices), and can be seen as a special case of the operators introduced by \citet{boon2021functional}. 

\subsubsection{Sobolev spaces on graphs}

We can use these differential operators to define inner products and Sobolev spaces on the graph. Recall that $\Lambda$ denotes the extended graph, i.e.,
\begin{equation*}
\Lambda = \V \cup \E = \cup_{i=1}^n \, \text{closure}( \Lambda_i).
\end{equation*}
%
Given a measurable function $u$ defined over $\Lambda$, let $u_i$ denote the restriction of $u$ to $\Lambda_i$. We define the \textit{standard} inner product
\begin{equation*}
    (u,v)_\Lambda = \sum_{i=1}^n (u_i, v_i)_{\Lambda_i} =\sum_{i=1}^n \int_{\Lambda_i} u_i v_i \ds,
\end{equation*}
which gives rise to the \textit{standard}  $L^2$-space
\begin{equation*}
  L^2(\Lambda)=L^2(\E) = \{ u \text{ meas.}: \int_\Lambda u^2\ds < \infty \}.
\end{equation*}
We note that we can identify  $L^2(\Lambda)$ with $L^2(\E)$ as they belong to the same equivalence class. 

Introducing a graph measure allows us to take into account the fact that edges and vertices have different dimensions. The graph measure $\dG$ \cite{friedman2004calculus} is given by
\begin{equation*}
    \int_{\mathcal{G}} u \dG = \int_{\mathcal{E}} u \dE + \int_{\mathcal{V}} u \dV, 
\end{equation*}
with edge and vertex measures
\begin{equation*}
    \int_{\mathcal{E}} u \dE = \sum_{i=1}^n \int_{\Lambda_i} u_i \ds, \quad \int_{\mathcal{V}} u \dV = \sum_{j=1}^m u(\vv_j).
\end{equation*}
The graph measure thus naturally induces a graph inner product
\begin{equation*}
    (u,v)_\mathcal{G} = \sum_{i=1}^n (u,v)_{\Lambda_i} + \sum_{j=1}^m u(\vv_j) v(\vv_j),
\end{equation*}
and the corresponding $L^2$ space
\begin{equation*}
    L^2(\mathcal{G}) = \{ u \text{ meas. }: \sum_{i=1}^n \norm{u}{L^2(\Lambda_i)} + \sum_{j=1}^m \vert u(\vv_j)\vert^2  <\infty\}.
\end{equation*}
We will also use the notation $u = (u_\E, u_\V) \in L^2(\mathcal{G})$ to separate the edge and vertex components of $u$. 

We now construct different types of Sobolev spaces on $\mathcal{G}$. We use $H^1(\E)$ and $H^2(\E)$ to denote the broken Sobolev spaces 
\begin{align*}
    H^1(\E) &= \{u \in L^2(\E): \partial_{s} u \in L^2(\E) \}, \\
    H^2(\E) &= \{u \in L^2(\E): \partial_{s} u \in L^2(\E), \partial_{ss} u \in L^2(\E) \},
\end{align*}
and $H^1(\Lambda)$ is defined as:
\begin{equation*}
    H^1(\Lambda) = \{ u \in L^2(\Lambda) : \GradG u \in L^2(\Lambda)\}.
\end{equation*}
The latter space is known from e.g.~\citep{arioli2018finite}, and has the norm
\begin{equation*}
    \norm{u}{H^1(\Lambda)}^2 = \norm{u}{L^2(\Lambda)}^2 + \norm{\GradG u}{L^2(\Lambda)}^2.
\end{equation*}
We use the notation $H_0^1(\Lambda)$ to denote $H^1$-functions with zero trace on $\partial V$. While $L^2(\Lambda)$ is equivalent to $L^2(\E)$, we note that $H^1(\E)$ and $H^1(\Lambda)$ are not equivalent. Indeed, recalling from standard Sobolev theory that $H^1(\Lambda)\subset C^0(\Lambda)$, we find $H^1(\Lambda) \subset H^1(\E)$, as $H^1(\E)$ functions can be discontinuous across vertices.

Next, let $H(\mathrm{div};\mathcal{G})$ denote the space
\begin{equation*}
    H(\mathrm{div}; \mathcal{G}) = \{ q \in L^2(\mathcal{\mathcal{E}}) : \DivG q \in L^2(\mathcal{G})\},
\end{equation*}
with the norm
\begin{gather}\label{eq:simple_hdiv_norm}
  \begin{aligned}
    \norm{q}{H(\mathrm{div};\mathcal{G})}^2 &= \norm{q}{L^2(\mathcal{E})}^2 + \norm{\DivG q}{L^2(\mathcal{G})}^2 \\
    & = \sum_{i=1}^n \norm{q}{L^2(\Lambda_i)}^2 + \sum_{i=1}^n \norm{\partial_s q}{L^2(\Lambda_i)}^2+ \sum_{j=1}^m \vert  \jump{q}_j \vert^2.
  \end{aligned}
\end{gather}

\begin{figure}
\begin{overpic}[scale=0.5,unit=1mm]{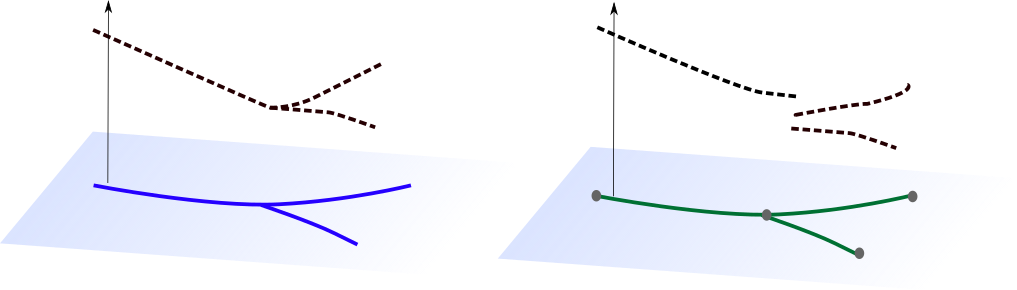}
\put(20, 5){\textcolor{blue}{$\Lambda$}}
\put(65, 5){\textcolor{green}{$\Lambda_1$}}
\put(78, 3){\textcolor{green}{$\Lambda_2$}}
\put(80, 10){\textcolor{green}{$\Lambda_3$}}
\put(75, 9){\textcolor{gray}{$x_2$}}
\put(55, 10){\textcolor{gray}{$x_1$}}
\end{overpic}
\caption{Examples of functions that are in $H^1(\Lambda)$ (left) and $H(\mathrm{div}; \mathcal{G})$ (right).}
\label{fig:sobolev}
\end{figure}
Figure \ref{fig:sobolev} shows examples of functions in $H^1(\Lambda)$ versus $H(\mathrm{div};\mathcal{G})$. The main difference between these spaces is that $u \in H(\mathrm{div}; \mathcal{G})$ can be discontinuous at the vertices.  We note that $H^1(\E)$ and $H(\mathrm{div};\mathcal{G})$ are equivalent, as $H^1(\E)$-functions have bounded values at $\partial \Lambda_i$ (and hence bounded jumps across $\mathcal{V}$). However, we keep the $H(\mathrm{div};\mathcal{G})$-notation to emphasize the connection to standard methods for dual mixed formulations. Moreover, we will see that an appropriately weighted $H(\mathrm{div};\mathcal{G})$-norm is required for uniform stability. 

Having $H^1(\Lambda)$ and $H(\mathrm{div};\mathcal{G})$  defined, it is easy to see that the following integration by parts formula holds.
\begin{lem}[Integration by parts] \label{lemma:integration-by-parts}
For $p\in H^1_0(\Lambda)$ and $q\in H(\mathrm{div};\mathcal{G})$ there holds that
\begin{equation*}
  \int_\mathcal{G} (\DivG q) \,  p \dG = -\int_{\mathcal{E}} q \, (\GradG p) \dE.
\end{equation*}
\end{lem}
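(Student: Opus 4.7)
The plan is to reduce the identity to a sum of classical one-dimensional integration by parts formulas, one per edge, and then collect the resulting boundary contributions vertex-by-vertex. I would first unfold both integrals using the definitions. On the left-hand side, the graph measure and the piecewise definition of $\DivG$ give
\begin{equation*}
\int_\mathcal{G} (\DivG q)\, p \dG
= \sum_{i=1}^n \int_{\Lambda_i} (\partial_s q_i)\, p_i \ds
+ \sum_{j=1}^m \jump{q}_j\, p(\vv_j),
\end{equation*}
while the right-hand side is simply $-\sum_i \int_{\Lambda_i} q_i\, (\partial_s p_i) \ds$. The task is therefore to show that the edge integrals, together with the vertex contributions, match up after standard IBP.

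Next I would apply the scalar integration by parts on each edge $\Lambda_i$ parametrized by $s \in (0,\ell_i)$. Because $q \in H(\mathrm{div};\mathcal{G})$ means $q_i \in H^1(\Lambda_i)$ and $p \in H^1(\Lambda)$ restricts to $p_i \in H^1(\Lambda_i)$, both functions admit well-defined point traces at the endpoints, and one has
\begin{equation*}
\int_{\Lambda_i} (\partial_s q_i)\, p_i \ds
= q_i(\boldsymbol{\lambda}_i(\ell_i))\, p_i(\boldsymbol{\lambda}_i(\ell_i))
- q_i(\boldsymbol{\lambda}_i(0))\, p_i(\boldsymbol{\lambda}_i(0))
- \int_{\Lambda_i} q_i\, (\partial_s p_i) \ds.
\end{equation*}
Summing over $i$ produces the desired bulk term $-\int_\mathcal{E} q\, (\GradG p) \dE$, plus a collection of endpoint contributions, one for each occurrence of a vertex as either the start or end of an edge.

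I would then regroup these endpoint contributions by vertex. The key observation is that $H^1(\Lambda) \hookrightarrow C^0(\Lambda)$ on the extended graph, so $p_i(\vv_j) = p(\vv_j)$ is single-valued at every vertex, regardless of which incident edge we approach from. Factoring out $p(\vv_j)$ at each vertex, the sum of $\pm q_i(\vv_j)$ across the incident edges collapses exactly into the definition
$\jump{q}_j = \sum_{E_\text{in}(\vv_j)} q_i(\vv_j) - \sum_{E_\text{out}(\vv_j)} q_i(\vv_j)$,
producing a term of the form $\sum_j p(\vv_j)\jump{q}_j$. Boundary vertices $\vv_j \in \partial\V$ contribute nothing because $p \in H^1_0(\Lambda)$ forces $p(\vv_j)=0$ there, so the sum may harmlessly be extended over all of $\V$. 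Matching this against the vertex part of the left-hand side completes the identity.

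The only genuinely delicate point is the sign/orientation bookkeeping: one must carefully record whether each edge leaves or enters a given vertex so that the endpoint signs from each local IBP combine into precisely the $\sum_{\text{in}} - \sum_{\text{out}}$ pattern that defines $\jump{q}_j$. Everything else — existence of the traces, continuity of $p$ across interior vertices, and vanishing of $p$ on $\partial \V$ — follows immediately from the hypotheses and the embedding $H^1(\Lambda) \hookrightarrow C^0(\Lambda)$; so I expect no analytic obstacles beyond this combinatorial sign check.
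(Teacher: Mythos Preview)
Your proposal is correct and follows essentially the same route as the paper: edge-wise one-dimensional integration by parts, followed by regrouping the endpoint terms at each vertex and using the continuity of $p$ across vertices to factor it out of the jump. The paper's version is slightly terser (it writes the boundary terms directly as $\sum_j \jump{pq}_j$ and then invokes continuity of $p$), but the content is identical, including the implicit use of $p|_{\partial\V}=0$ to extend the vertex sum over all of $\V$.
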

\begin{proof}
A direct calculation shows that
\begin{gather*}
\begin{aligned}
  - \int_{\mathcal{G}} q \, (\GradG p) \dE
  &= - \sum_{i=1}^n \int_{\Lambda_i} q_i \, \partial_s p_i \ds
  =  \sum_{i=1}^n \int_{\Lambda_i} \partial_s q_i \, p_i \ds + \sum_{j=1}^m \jump{pq}_j \\
  &=  \sum_{i=1}^n \int_{\Lambda_i} \partial_s q\, p \ds + \sum_{j=1}^m p(v_j) \jump{q}_j
  =  \int_\mathcal{G} (\DivG q) p \dG ,
\end{aligned}
\end{gather*}
where we used that $p$ is continuous over the graph.
\end{proof}

\subsubsection{Finite element spaces on graphs}

We now introduce finite element meshes and finite element spaces defined relative to the graph. Let $\Lambda^h$ be a finite element mesh of the centerline $\Lambda$, composed of mesh segments $\Lambda_i^h$, one for each centerline $\Lambda_i$. Each mesh segment $\Lambda_i^h$ is a mesh consisting of intervals embedded in $\R^3$. Relative to $\Lambda_i^h$, we define $CG_{k}(\Lambda_i^h)$ to be the space of continuous piecewise polynomials of degree $k$ defined relative to $\Lambda_i^h$, i.e.
\begin{align*}
CG_{k}(\Lambda_i^h) = \lbrace v^h \in C^0(\Lambda_i), \, v^h \vert_T \in P_k(T)  \text{ for } T \in \Lambda_i^h \rbrace,
\end{align*}
and similarly $CG_{k}(\Lambda^h)$ to be the space of continuous piecewise polynomials of degree $k$ defined relative to $\Lambda^h$, i.e.
\begin{align*}
CG_{k}(\Lambda^h) = \lbrace v^h \in C^0(\Lambda), \, v^h \vert_T \in P_k(T)  \text{ for } T \in \Lambda^h \rbrace .
\end{align*}
We define $DG_{k}(\Lambda_i^h)$ to be the space of discontinuous piecewise polynomials of degree $k$ on $\Lambda_i^h$, i.e.
\begin{align*}
DG_{k}(\Lambda_i^h) = \lbrace v^h \in L^2(\Lambda_i), \, v^h \vert_T \in P_k(T)  \text{ for } T \in \Lambda_i^h \rbrace,
\end{align*}
and $DG_{k}(\Lambda^h) = \cup_{i=1}^n DG_{k}(\Lambda_i^h)$ to be the equivalent space on $\Lambda^h$.

\subsection{A graph calculus formulation of the Stokes--Brinkman network model}
\label{sec:graph-calculus-model}

With the graph calculus notation introduced in~\Cref{sec:graph-derivatives}, the time--dependent Stokes--Brinkman model \eqref{eq:1d-pvs} can be succinctly expressed as: for $t > 0$, find $(\q, \p)$ defined over $\mathcal{G}$ such that
\begin{subequations}
  \begin{align}
    \partial_t \q + \mathcal{R} \q - \nu_{\text{eff}} \Delta_\mathcal{E} \q + \GradG \p   &= 0 && \text{ on } \mathcal{E}, \label{eq:1d-stokes-1}\\
    \DivG \q &= f && \text{ on } \mathcal{G} \label{eq:1d-stokes-2},
  \end{align}
  \label{eq:1d-stokes}
\end{subequations}
where
\begin{align}
  f = \begin{cases}
    \partial_t A \text{ on } \E, \\
    0 \text{ on } \V.
  \end{cases}
\end{align}
Moreover, this system can be reduced to a time-dependent hydraulic network model. Since $\Delta_\E q = \GradG (\DivG q)$ on the edges $\E$, \eqref{eq:1d-stokes-2} gives that $\Delta_\mathcal{E} q= \nabla_\mathcal{G} f$. Thus, $(\q, \p)$ solving~\eqref{eq:1d-stokes} also solve:
\begin{subequations}
  \begin{align}
    \partial_t \q + \mathcal{R} \q +  \nabla_{\mathcal{G}} p   &= g && \text{ on } \mathcal{E}, \label{eq:1d-hyd-1}\\
    \DivG \q &= f && \text{ on } \mathcal{G}, \label{eq:1d-hyd-2}
  \end{align}
  \label{eq:1d-hyd}%
\end{subequations}%
where by definition $g = \nu_{\text{eff}} \GradG f = \nu_{\text{eff}} \partial_s \partial_t A$.

\vspace{0.5em}
\begin{remark}[Relation to quantum graphs]
  \label{rmrk:quantum_graph}
  The system \eqref{eq:1d-hyd} can be interpreted as a quantum graph with the differential operator $(q,p)\mapsto (\partial_t q + \mathcal{R}q + \partial_s p, \partial_s q)$. The bifurcation condition is equivalent to the standard Neumann-Kirchhoff conditions. In the stationary case, i.e. $\partial_t q=0$, the flux can be eliminated, yielding the simpler system
  \begin{equation}
    - \partial_s( \mathcal{R}^{-1} \partial_{s} \p) = \tilde{f} \qquad \text{ on } \Lambda_i,
  \end{equation}
  with $\tilde{f}= f - \partial_s (\nu_{\text{eff}} \mathcal{R}^{-1} \partial_s f).$ This corresponds to a quantum graph with the Laplacian $p\mapsto - \partial_s( \mathcal{R}^{-1} \partial_{s} \p)$ as the differential operator \cite{berkolaiko2013introduction}. The analysis we provide herein can be viewed as an extension of previous work on quantum graphs \cite{arioli2018finite} to the case where the differential operator is the primal and dual mixed Laplacian. 
\end{remark}


In the following sections, we will study the well-posedness and stability of (discretizations of) the hydraulic network model~\eqref{eq:1d-hyd} and in part~\eqref{eq:1d-stokes}. To simplify the exposition, we will only consider the stationary case ($\partial_t q=0$) with homogeneous Dirichlet boundary conditions ($p=0$ on $\partial \V$). We will use the saddle point theory from ~\citep{boffi-brezzi-fortin}, expressing the models in the general abstract mixed form: find $q \in V$, $p \in M$ such that
\begin{gather}
  \begin{aligned}
    a(\q, \psi) + b (\psi, p) &= L(\psi),  \\
    b\left(\q, \phi \right) &= F(\phi),
  \end{aligned}
  \label{eq:abstract}
\end{gather}
for all $\psi \in V$, $\phi \in M$. Here $V$ and $M$ are Hilbert spaces with inner products $(\cdot, \cdot)_V$ and $(\cdot, \cdot)_M$, respectively, $a : V \times V \rightarrow \R$ and $b : V \times M \rightarrow R$ are bilinear forms, and $L \in V^{\ast}$ and $F \in M^{\ast}$ are given functionals. We can and will study the hydraulic network formulation~\eqref{eq:1d-hyd} in both primal and dual variational form, while the Stokes--Brinkman model \eqref{eq:1d-stokes} requires the dual form. 

Given the discrete function spaces $V^h$ and $M^h$, with inner products $(\cdot, \cdot)_{V^h}$ and $(\cdot, \cdot)_{M^h}$, and norms $\| \cdot \|_{V^h}$ and $\| \cdot \|_{M^h}$, we will also consider discretizations of~\eqref{eq:abstract} i.e. the problem of finding discrete solutions $(\q^h, \p^h) \in V^h \times M^h$ such that~\eqref{eq:abstract} holds for all $\psi \in V^h$ and $\phi \in M^h$. The discrete system is then associated with a discrete inf-sup constant $\beta^h$, defined by  
\begin{align}
\beta^h = \inf_{0 \neq (\q^h, \p^h) \in W^h}\sup_{0 \neq (\psi, \phi) \in W^h} \frac{\vert a(\q^h, \psi) + b(\q^h, \phi)+b(\psi, \p^h) \vert}{(\norm{\q^h}{V^h}+\norm{\p^h}{M^h})(\norm{\psi}{V^h}+\norm{\phi}{M^h})},
\end{align}
where $W^h = V^h \times M^h$. The discretization is said to be inf-sup stable if there exists some $\beta> 0$ such that $\beta^h\geq \beta$ for any $h>0$. The inf-sup constant can be equivalently expressed as $\beta^h = \vert \xi^h_{\text{min}} \vert$ where $\xi^h_{\text{min}}$ is the smallest in modulus eigenvalue of the following generalized eigenvalue problem: find $(\q^h, \p^h) \in W^h$, $\xi^h\in\mathbb{R}$ such that
\begin{align}
a(\q^h, \psi) + b(\q^h, \phi)+b(\psi, \p^h) = \xi^h \left( (\q^h, \psi)_{V^h} + (\p^h, \psi)_{M^h} \right) \label{eq:eigenvalue}
\end{align}
for all $(\psi, \phi) \in W^h$. We will use this eigenvalue problem to provide numerical evidence that finite element discretizations are uniformly stable with respect to both $h$ and the network topology.

\subsection{Well-posedness of primal formulations of the hydraulic network models}
\label{sec:graph-analysis-primal}
In this section, we focus on the primal formulation and its stability properties.
\subsubsection{A primal formulation of the hydraulic network model}

We begin by presenting a primal mixed formulation of the stationary ($\partial_t q = 0$) hydraulic network model~\eqref{eq:1d-hyd} with homogeneous Dirichlet conditions ($p=0$ on $\partial \V$) based on the function space pairing $L^2(\Lambda) \times H^1_0(\Lambda)$. Multiplying~\eqref{eq:1d-hyd} by test functions $\psi \in L^2(\Lambda)$ and $\phi \in H^1_0(\Lambda)$, integrating over the graph, and using the integration by parts (Lemma \ref{lemma:integration-by-parts}), give the \textit{primal mixed variational formulation}: find $q \in L^2(\Lambda)$, $p \in H_0^1(\Lambda)$ such that
\begin{gather}
  \begin{aligned}
    (\mathcal{R} q,\psi)_\mathcal{E} +  (\nabla_\mathcal{G} p, \psi)_\mathcal{E}   &= (g,\psi)_\E, \\
    (q, \nabla_\mathcal{G} \phi)_\mathcal{E} &= (- f,\phi)_\mathcal{G} .
  \end{aligned}
  \label{eq:varform-primal}
\end{gather}
Note that for any $u,v \in L^2(\E)$, 
\begin{equation}
  (u,v)_\E = \sum_{i=1}^n (u,v)_{L^2(\Lambda_i)} = (u, v)_{L^2(\Lambda)}
 \end{equation}
We thus observe that~\eqref{eq:varform-primal} fits the general abstract framework~\eqref{eq:abstract} when identifying $V = L^2(\Lambda)$ and $M = H^1_0(\Lambda)$, and
\begin{equation*}
  a(q, \psi) = (\mathcal{R} q,\psi)_\Lambda, \quad 
  b(\psi, p) = (\psi, \partial_s p)_\Lambda, \quad
  L(\psi)  = (g,\psi)_\Lambda, \quad
  F(\phi) = (-f, \phi)_\Lambda. \quad
\end{equation*}

Our first theoretical result shows that the system~\eqref{eq:varform-primal} is well-posed, with uniform stability and inf-sup constants in resistance-weighted norms. 
\begin{theorem}
  \label{thm:primal_inf_sup}
  Let $V=L^2(\Lambda)$ and $M=H^1_0(\Lambda)$ be endowed with the weighted norms
  \begin{gather}\label{eq:primal_inf_sup_norms}
    \begin{aligned}
      \norm{p}{M} &= \norm{\mathcal{R}^{-1/2} \nabla_\mathcal{G} p}{L^2(\Lambda)}^2, \\
      \norm{q}{V} &= \norm{\mathcal{R}^{1/2} q}{L^2(\Lambda)}.
    \end{aligned}
  \end{gather}
  Given $f \in L^2(\mathcal{G})$ and $g \in L^2(\Lambda)$, there then exists a unique solution $q \in V$ and $p \in M$ to the primal mixed variational formulation \eqref{eq:varform-primal}. Moreover, the coercivity and inf-sup constants  are uniform with respect to the size and topology of the network.
  \label{thm:primal}
\end{theorem}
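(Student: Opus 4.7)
The plan is to apply the classical Brezzi saddle-point theorem to the abstract system associated with \eqref{eq:varform-primal}, after verifying the three standard hypotheses in the weighted norms \eqref{eq:primal_inf_sup_norms}. The crucial design choice already made by the authors is that the norm on $V$ carries the weight $\mathcal{R}^{1/2}$ and the norm on $M$ carries $\mathcal{R}^{-1/2}$, so that every appearance of $\mathcal{R}$ in $a$ and $b$ will be absorbed into the norms themselves and the resulting Brezzi constants will be absolute, in particular independent of the resistance profile and of the graph topology.

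First, I would check that $V$ and $M$ are genuine Hilbert spaces under \eqref{eq:primal_inf_sup_norms}. For $V=L^2(\Lambda)$ this is immediate once $\mathcal{R}$ is edgewise bounded above and bounded away from zero, which is guaranteed by \eqref{eq:resistance} together with the geometric assumptions of Section~\ref{sec:model-geom}. For $M=H^1_0(\Lambda)$, one needs the weighted quantity $\|\mathcal{R}^{-1/2}\nabla_\mathcal{G} p\|_{L^2(\Lambda)}$ to define a norm; this reduces to a Poincaré inequality on the metric graph with zero trace on $\partial\mathcal{V}$, which follows from connectedness of $\mathcal{G}$ and continuity of $H^1(\Lambda)$-functions across internal vertices.

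Next come continuity and coercivity. Continuity of $a$ and $b$ is a one-line Cauchy--Schwarz: $a(q,\psi)=(\mathcal{R}^{1/2}q,\mathcal{R}^{1/2}\psi)_\Lambda\le \|q\|_V\|\psi\|_V$ and $b(\psi,p)=(\mathcal{R}^{1/2}\psi,\mathcal{R}^{-1/2}\nabla_\mathcal{G} p)_\Lambda \le \|\psi\|_V\|p\|_M$, both with constant $1$. Coercivity of $a$ is automatic on all of $V$ (hence trivially on $\ker b$) since $a(q,q)=\|\mathcal{R}^{1/2}q\|_{L^2(\Lambda)}^2=\|q\|_V^2$. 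The only remaining Brezzi hypothesis is the inf-sup condition for $b$, and this is the heart of the argument. Given $p\in M$, I would pick the explicit test flux $\psi:=\mathcal{R}^{-1}\nabla_\mathcal{G} p\in V$. A direct computation gives $\|\psi\|_V=\|\mathcal{R}^{1/2}\cdot\mathcal{R}^{-1}\nabla_\mathcal{G} p\|_{L^2(\Lambda)}=\|p\|_M$ and $b(\psi,p)=(\mathcal{R}^{-1}\nabla_\mathcal{G} p,\nabla_\mathcal{G} p)_\Lambda = \|p\|_M^2$, so $\sup_{0\ne\psi\in V} b(\psi,p)/\|\psi\|_V \ge \|p\|_M$ and the inf-sup constant is at least $1$.

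By Brezzi's theorem this delivers existence, uniqueness, and a stability estimate for \eqref{eq:varform-primal} whose constants depend only on universal constants, yielding the claimed uniformity in both resistance and network topology. The main potential obstacle I anticipate is not the abstract machinery but step one: one has to be confident that the weighted $\|\mathcal{R}^{-1/2}\nabla_\mathcal{G} p\|_{L^2(\Lambda)}$ quantity really is a norm on $H^1_0(\Lambda)$ uniformly across graphs with arbitrarily many bifurcations and varying edge lengths. Crucially, however, the theorem's uniformity claim is about the Brezzi constants in these weighted norms, not about equivalence with the standard $H^1$-norm; the topology-dependent Poincaré constant is silently encoded in the definition of $\|\cdot\|_M$ and therefore does not enter the coercivity or inf-sup bounds, which is precisely why this choice of norms succeeds.
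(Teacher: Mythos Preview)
Your proposal is correct and follows essentially the same approach as the paper: verify the Brezzi conditions, obtain coercivity of $a$ with constant $1$ directly from $a(q,q)=\|q\|_V^2$, and establish the inf-sup condition by the explicit choice $\psi=\mathcal{R}^{-1}\nabla_\mathcal{G} p$, which also yields constant $1$. Your additional remarks on continuity and on the Poincar\'e constant being absorbed into the definition of $\|\cdot\|_M$ are accurate and, if anything, more explicit than the paper's own proof.
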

\begin{proof}
    The proof is by verifying the Brezzi conditions. First, note that by definition
    \begin{align}
        a(q,q) = \norm{\mathcal{R}^{1/2}q}{L^2(\Lambda)}^2 = \norm{ q }{V}^2,
    \end{align}
    which yields independent of $\mathcal{G}$ the coercivity constant one. Due to the boundary conditions the Poincar\'e inequality guarantees that $\norm{\mathcal{R}^{-1/2} \GradG \phi}{L^2(\Lambda)}$ is a norm on $M$. Then, for any $\phi \in M$,
    letting $q = \mathcal{R}^{-1}\GradG \phi \in L^2(\Lambda)$, we find
    that by definition
\begin{gather}    
    \begin{aligned}
      \sup_{ q\in V} \frac{b(q,\phi)}{\norm{\phi}{M}} & \geqslant
      \frac{ \| \mathcal{R}^{-1/2} \nabla_{\mathcal{G}} \phi \|^2_{L^2(\Lambda)}}{\norm{\phi}{M}} \geqslant
      \frac{ \| \mathcal{R}^{-1/2} \nabla_{\mathcal{G}} \phi \|^2_{L^2(\Lambda)}}{\norm{\mathcal{R}^{-1/2} \GradG \phi}{L^2(\Lambda)}} =
      \norm{\phi}{M}.
    \end{aligned}
    \end{gather}
This confirms the inf-sup condition with constant 1. 
\end{proof}

Applying standard Sobolev theory, we can show that the solution exhibits a higher regularity on each edge: 
\begin{theorem}[Higher regularity]
  Let $p \in M$, $q \in V$ solve \eqref{eq:varform-primal}. Then $\p \in H^2(\E)$ and $\q \in H^1(\E)$.
  \label{thm:higher-regularity}
\end{theorem}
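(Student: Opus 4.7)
The plan is to exploit that $H^k(\E)$ is a broken Sobolev space with no continuity requirement across vertices: it suffices to establish regularity on each edge $\Lambda_i$ separately. I would localize the variational identities~\eqref{eq:varform-primal} by testing against functions supported inside a single edge, where the vertex coupling is inactive, and then invoke elementary one-dimensional distributional calculus.

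For $q \in H^1(\E)$, I would use the mass conservation equation. Fix an edge $\Lambda_i$ and take $\phi \in C_c^\infty(\Lambda_i)$ (recall each $\Lambda_i$ is open), extended by zero to the rest of $\Lambda$; such $\phi$ belong to $H^1_0(\Lambda)$ and vanish at every vertex, so the vertex component of $(-f, \phi)_\mathcal{G}$ drops out. The remaining identity $(q_i, \partial_s \phi)_{\Lambda_i} = -(f_i, \phi)_{\Lambda_i}$ states $\partial_s q_i = f_i$ in the distributional sense on $\Lambda_i$. Since $f \in L^2(\mathcal{G})$ implies $f_i \in L^2(\Lambda_i)$, this yields $\partial_s q \in L^2(\E)$, i.e.\ $q \in H^1(\E)$.

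For $p \in H^2(\E)$, I would read the first equation as an algebraic pointwise identity. Testing against arbitrary $\psi \in L^2(\E)$ yields $\partial_s p = g - \mathcal{R}\, q$ almost everywhere on each edge. Differentiating edgewise,
\begin{equation*}
\partial_{ss} p = \partial_s g - (\partial_s \mathcal{R})\, q - \mathcal{R}\,(\partial_s q).
\end{equation*}
Under the natural smoothness of the modelling data — $\mathcal{R} \in W^{1,\infty}(\E)$ (from piecewise-smooth cross-section data entering~\eqref{eq:resistance}) and $g \in H^1(\E)$ (since $g = \nu_{\text{eff}}\,\partial_s f$ for a smooth wall-motion source) — and combining with the $H^1(\E)$ regularity of $q$ just proven, the right-hand side lies in $L^2(\E)$, giving $p \in H^2(\E)$.

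The argument is short and standard, and I do not foresee a genuine obstacle. The one subtlety is that the $H^2$ claim for $p$ strictly speaking needs slightly more regularity on $g$ and $\mathcal{R}$ than the well-posedness Theorem~\ref{thm:primal} explicitly posits; I would either promote these to hypotheses of Theorem~\ref{thm:higher-regularity} or phrase the conclusion conditionally on such data assumptions, both of which are consistent with the paper's modelling framework.
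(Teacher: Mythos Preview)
Your argument is correct and, in fact, more transparent than the paper's. The paper proceeds by a post-processing construction: on each edge $\Lambda_i$ it introduces an auxiliary local boundary-value problem (using the trace of $p$ at the endpoints as Dirichlet data, which is available since $p\in H^1(\Lambda)\subset C^0(\Lambda)$), invokes one-dimensional elliptic regularity to obtain $\tilde p_i\in H^2(\Lambda_i)$ and $\tilde q_i\in H^1(\Lambda_i)$, and then appeals to uniqueness to identify $(\tilde p_i,\tilde q_i)$ with the restrictions of $(p,q)$. Your route bypasses the auxiliary problem entirely: you read off $\partial_s q_i=f_i$ directly by localizing the test functions, and then obtain $\partial_{ss}p$ by differentiating the pointwise identity $\partial_s p=g-\mathcal{R}q$ coming from the first equation.

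Both arguments ultimately rest on the same edgewise regularity of the data. The additional smoothness you flag for $\mathcal{R}$ and $g$ is not a weakness of your approach relative to the paper's: the paper's elliptic regularity step needs exactly the same hypotheses, they are simply left implicit there. Your explicit acknowledgment of this point is an improvement in rigor. What the paper's construction buys is a slightly more packaged statement (``solve, then identify''), while your approach buys directness and avoids having to argue that the auxiliary local solution coincides with the global one.
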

\begin{proof}
    The proof is by a post-processing of the solution. On each edge $\Lambda_i$, define $\tilde{q}_i, \tilde{p}_i$ as the solutions of 
    \begin{align}
    \tilde{q}_i + \partial_s \tilde{p}_i &= f_\E \text{ on } \Lambda_i, \\
    \partial_s \tilde{q}_i &= p \text{ on } \partial \Lambda_i.
\end{align}
This problem is well defined as $p \in H^1(\Lambda)$, meaning that $p$ has a well-defined trace at the vertices. As $f_\E \in L^2(\Lambda_i)$, we further have $\tilde{p} \in H^2(\Lambda_i)$ and $\tilde{q}=H^1(\Lambda_i)$. By construction, $\tilde{p}_i=p$ and $\tilde{q}_i=q$ on each edge $\Lambda_i$, meaning that $p \in H^2(\E)$ and $q \in H^1(\E)$.
\end{proof}

\subsubsection{Stability of a family of primal discretizations}

Next, we propose to discretize the primal formulation~\eqref{eq:varform-primal} using $CG_k$ spaces for pressure and $DG_{k-1}$ spaces for the velocity defined relative to $\Lambda^h$ for $k \geq 1$, e.g.: 
\begin{equation}\label{eq:primal_h_spaces}
  M^h = CG_k(\Lambda^{h}), \quad V^h = DG_{k-1}(\Lambda^{h}).
\end{equation}
Assuming that $\mathcal{R}^{-1}$ is piecewise constant
these spaces satisfy the discrete Brezzi stability conditions.

\begin{remark}[Connection to finite volume schemes]
  Using the finite element spaces \eqref{eq:primal_h_spaces}, the primal mixed formulation
  can be interpreted as a staggered grid finite volume scheme~\cite{greyvenstein1994segregated}
  where the pressure and flux variables are bound to nodes and edges, respectively.
  The matrix representation of the discrete problem then takes the form 
  %
  \begin{equation}
  \begin{bmatrix}
    \pmb{\mathcal{R}} & \pmb{G}\\
    -\pmb{D}  & \pmb{0}\\       
  \end{bmatrix}
  \end{equation}
  where the matrix $\pmb{G}$, the discrete gradient/incidence matrix, encodes in its rows the connectivity of graph
  edges to nodes. Furthermore, 
  $\pmb{D}$ is the transpose of $\pmb{G}$ and $\pmb{\mathcal{R}}$ is a diagonal matrix of resistances for each edge of the graph. The Schur complement $-\pmb{D} \pmb{\mathcal{R}}^{-1} \pmb{G}$ is in fact the graph Laplacian, cf.~\Cref{rmrk:quantum_graph}.
\end{remark}

The norms \eqref{eq:primal_inf_sup_norms} induce an exact Schur complement preconditioner for the discretization by \eqref{eq:primal_h_spaces}. Since the stability constants in \Cref{thm:primal_inf_sup} are independent of both the graph geometry and the graph topology, we expect the condition number of \eqref{eq:eigenvalue} (i.e. $\xi^h_{\text{max}}/\xi^h_{\text{min}}$ with $\xi^h_{\min}$ and $\xi^h_{\max}$ denoting the smallest and largest in magnitude eigenvalues, respectively) to be constant for any $\mathcal{G}$ and mesh size. This theoretical expectation is confirmed by numerical experiments, see \Cref{tab:primal-infsup} for the case $\mathcal{R} = 1$ and the associated \Cref{fig:graph_generations}. We remark that these results would not be altered by varying the resistance.
\begin{figure}
  \begin{center}
  \begin{subfigure}{0.45\textwidth}
      \includegraphics[height=0.35\textwidth]{./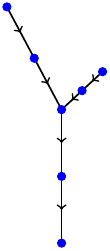}
      \includegraphics[height=0.35\textwidth]{./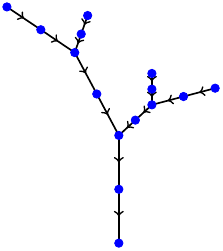}
      \includegraphics[height=0.35\textwidth]{./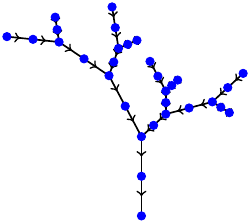}
      \caption{}
  \end{subfigure}
  \begin{subfigure}{0.45\textwidth}
    \includegraphics[height=0.35\textwidth]{./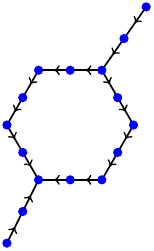}
    \includegraphics[height=0.35\textwidth]{./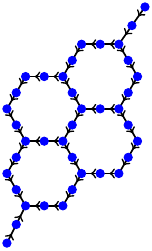}
    \includegraphics[height=0.35\textwidth]{./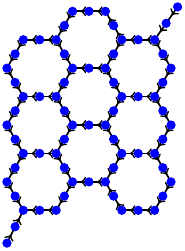}
    \caption{}
  \end{subfigure}
  \end{center}
  \caption{The (a) arterial \textit{tree} and (b) \textit{honeycomb} networks used for numerical experiments. From left to right, the networks grow by the addition of more edges. The arterial tree networks are grown by adding more generations; while the honeycomb networks grow by increasing the number of cycles.}
  \label{fig:graph_generations}
\end{figure}
\begin{table}
  \begin{subtable}{0.45\textwidth} \footnotesize
    \centering
    \begin{tabular}{l|cccc}
      \toprule
      \backslashbox{$n$}{$h$}
      &\makebox[2em]{1}&\makebox[2em]{0.5}&\makebox[2em]{0.25}
      &\makebox[2em]{0.125} \\
      \midrule
      1 & 2.618 & 2.618 & 2.618 & 2.618\\
      3 & 2.618 & 2.618 & 2.618 & 2.618\\
      7 & 2.618 & 2.618 & 2.618 & 2.618\\
      15 & 2.618 & 2.618 & 2.618 & 2.618\\
      \bottomrule
    \end{tabular}
    \caption{Arterial tree networks}
  \end{subtable}
  \hfill
  \begin{subtable}{0.45\textwidth} \footnotesize
    \centering
    \begin{tabular}{l|cccc}
      \toprule
      \backslashbox{$n$}{$h$}
      &\makebox[2em]{1}&\makebox[2em]{0.5}&\makebox[2em]{0.25}
      &\makebox[2em]{0.125}\\
      \midrule
      8 & 2.618 & 2.618 & 2.618 & 2.618\\
      18 & 2.618 & 2.618 & 2.618 & 2.618\\
      32 & 2.618 & 2.618 & 2.618 & 2.618\\
      50 & 2.618 & 2.618 & 2.618 & 2.618\\
      \bottomrule
    \end{tabular}
    \caption{Honeycomb networks}
  \end{subtable}
  \caption{
  Spectral condition numbers of the generalized eigenvalue problem  
  \eqref{eq:eigenvalue} with (referring to the notation introduced therein) $a$ being the primal mixed formulation and the norms on $V^h$, $M^h$ given by \eqref{eq:primal_inf_sup_norms}. Computations were performed on arterial tree and honeycomb networks (see \Cref{fig:graph_generations}), with $n$ denoting the number of bifurcation points, and $h$ denoting the mesh size.}
\label{tab:primal-infsup}
\end{table}

\subsection{Well-posedness of a dual mixed formulation} 
\label{sec:graph-analysis-dual}

We now turn to introduce and analyze a dual mixed formulation of the hydraulic network model~\eqref{eq:1d-hyd}. The Stokes--Brinkman network model~\eqref{eq:1d-stokes} can be expressed in a similar dual mixed form, as we also illustrate below but do not analyze further.  

\subsubsection{A dual mixed formulation of the network flow models}

To construct a dual mixed variational formulations of \eqref{eq:1d-hyd}, we multiply \eqref{eq:1d-hyd-1} by a test function $\psi \in H(\mathrm{div}; \mathcal{G})$ and \eqref{eq:1d-hyd-2} by a test function $\phi \in L^2(\mathcal{G})$ and integrate over $\mathcal{G}$. Multiplication in $L^2(\mathcal{G})$ implies that we multiply edge variables by edge variables, and vertex variables by vertex variables. We then find that the hydraulic network model can be expressed in the abstract form~\eqref{eq:abstract} with $V = H(\mathrm{div}, \mathcal{G})$ and $M = L^2(\mathcal{G})$ after defining
\begin{subequations}
  \begin{align}
    a(\q,\psi) &= (\q, \psi )_\E, \\
    b(\q, \phi) &= - (\DivG \q, \phi)_{\mathcal{G}}
    = - ( \partial_s \q, \phi)_{\mathcal{E}} - ( \jump{\q}, \phi)_{\mathcal{V}}, \\
    L(\psi) &= (g, \psi)_\E + (\tilde{p}^{\text{ref}}, \psi)_{\partial \V}, \\
    F(\phi) &= -(f , \phi )_{\mathcal{G}},
  \end{align}%
  \label{eq:a-to-F}%
\end{subequations}%
where the second term in $b$ accounts for the conservation of mass condition at the bifurcations. Further, $\tilde{p}^{\text{ref}}$ is given by the boundary conditions. We note that the Stokes--Brinkman network model~\eqref{eq:1d-stokes} can be expressed in an analogous dual mixed form over $V \times M$ with $b, L$ and $F$ given by~\eqref{eq:a-to-F}, and $a$ defined by
\begin{equation}
  a(\q,\psi)
  = (\GradG \nu_{\text{eff}} \q, \GradG \psi)_{\mathcal{E}} + (\q, \psi )_{\mathcal{E}} .
\end{equation}

The next result shows that the dual formulation~\eqref{eq:a-to-F} is well-posed. Moreover, its stability and inf-up constants defined relative to suitably weighted norms are uniform with respect to the graph topology and cardinality.
\begin{thm}
  Consider the dual mixed hydraulic network problem given by \eqref{eq:abstract} with \eqref{eq:a-to-F} defined over $V=H(\mathrm{div}; \mathcal{G})$ and $M=L^2(\mathcal{G})$ endowed with the weighted norms:
\begin{subequations}
  \label{eq:mixed_norm_C}
  \begin{align}
    \norm{\q}{V}^2 &= \norm{\q}{L^2(\mathcal{E})}^2 + \norm{ \ell \partial_s \q}{L^2(\mathcal{E})}^2 + \norm{\alpha^{-1} \ell \jump{q}}{L^2(\mathcal{V})}^2, \\
    \norm{\p}{M}^2& = \norm{\ell^{-1}\p_\mathcal{E}}{L^2(\mathcal{E})}^2 +  \norm{\alpha \ell^{-1}\p_\mathcal{V}}{L^2(\mathcal{V})}^2,
  \end{align}
\end{subequations}
where $\ell= \sum_{i=1}^n \ell_i$ is the total length of the network, and $\alpha$ is defined for each vertex $\vv_j$ as the square root of an averaged edge length:
\begin{equation}4
  \alpha_j^2 =  \left( \frac{\sum_{\Lambda_i \in E(\vv_j)} \ell_i}{m} \right) ,
  \label{eq:degree} 
\end{equation}
where $m$ is the total number of vertices in the network. Given $f \in L^2(\mathcal{G})$ and $g \in L^2(\E)$, there then exists a unique solution $(q, p) \in V \times M$. Moreover, the Brezzi coercivity and inf-sup constants are uniform with respect to the size and topology of the network.
\end{thm}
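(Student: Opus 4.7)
The plan is to verify the three Brezzi conditions from saddle-point theory for $V = H(\mathrm{div};\mathcal{G})$ and $M = L^2(\mathcal{G})$ endowed with the weighted norms \eqref{eq:mixed_norm_C}. The content of the theorem is that the continuity, coercivity-on-the-kernel, and inf-sup constants can all be bounded independently of the number and arrangement of edges and vertices of $\mathcal{G}$. The global scale $\ell$ and the vertex-local averaged length $\alpha_j$ from \eqref{eq:degree} are calibrated precisely to put the edge and vertex contributions of $\DivG$ on an equal footing under the weighted norms.

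The routine estimates come first. Continuity of $a(q,\psi) = (q,\psi)_\E$ is immediate from $\norm{q}{L^2(\E)} \leq \norm{q}{V}$ combined with Cauchy--Schwarz, giving constant one. For continuity of $b(q,\phi) = -(\partial_s q, \phi_\E)_\E - (\jump{q}, \phi_\V)_\V$, I would re-pair the factors as $(\ell \partial_s q, \ell^{-1} \phi_\E)_\E$ and $(\alpha^{-1}\ell\jump{q}, \alpha\ell^{-1}\phi_\V)_\V$ and apply Cauchy--Schwarz termwise, producing a continuity constant of $\sqrt{2}$ that is independent of $\mathcal{G}$. For coercivity on $\ker B$: any $q \in V$ with $\DivG q = 0$ satisfies $\partial_s q \equiv 0$ on $\E$ and $\jump{q}_j = 0$ at every vertex, so both derivative-type terms in $\norm{q}{V}^2$ vanish and the norm collapses to $\norm{q}{L^2(\E)}^2 = a(q,q)$. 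Coercivity therefore holds with constant one, uniformly in $\mathcal{G}$.

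The main step is the inf-sup condition, for which I would use a Fortin-type construction. Given $p = (p_\E, p_\V) \in M$, build $q \in V$ edge-by-edge: on each $\Lambda_i$ set $\partial_s q_i = -\ell^{-2} p_\E|_{\Lambda_i}$ by antidifferentiation of $p_\E$, and fix one per-edge integration constant $c_i$ so that the resulting vertex jump satisfies $\jump{q}_j = -\alpha_j^2 \ell^{-2} p_\V(v_j)$. The $c_i$ are determined by a linear system whose coefficient matrix is, up to diagonal scaling, the graph incidence matrix; the global compatibility $\sum_j \jump{q}_j = \int_\E \partial_s q$ that obstructs surjectivity of $\DivG$ is absorbed into the pressure boundary treatment at $\partial\V$. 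With this $q$, the pairing $-(\DivG q, p)_\mathcal{G}$ evaluates exactly to $\ell^{-2}\norm{p_\E}{L^2(\E)}^2 + \sum_j \alpha_j^2 \ell^{-2} |p_\V(v_j)|^2 = \norm{p}{M}^2$, the weighted $\norm{\ell \partial_s q}{L^2(\E)}$ and $\norm{\alpha^{-1}\ell\jump{q}}{L^2(\V)}$ contributions to $\norm{q}{V}$ are directly bounded by $\norm{p}{M}$, and the residual $\norm{q}{L^2(\E)}$ is controlled edge-wise by a Poincar\'e-type estimate with $\ell$ as the scale factor.

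The main obstacle is ensuring that all of these constants remain bounded as $\mathcal{G}$ grows in size and topological complexity. The sensitive point is the bound on the per-edge constants $c_i$ returned by the incidence-matrix system: a naive estimate picks up the graph's degree distribution and vertex count. The specific choice in \eqref{eq:degree} of $\alpha_j^2$ as a local edge-length average normalized by the total vertex count $m$ is precisely what is needed to balance the vertex-term weights $\alpha\ell^{-1}$ in $\norm{p}{M}$ against the flux-jump weights $\alpha^{-1}\ell$ in $\norm{q}{V}$, yielding a $\mathcal{G}$-uniform bound. Once the three Brezzi conditions are verified with constants independent of $\mathcal{G}$, existence, uniqueness, and the stability estimate for $(q,p) \in V \times M$ follow from the abstract Brezzi theorem \cite{boffi-brezzi-fortin}.
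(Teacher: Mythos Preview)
Your overall plan---verify the Brezzi conditions with constants independent of $\mathcal{G}$---matches the paper, and your treatment of continuity and of coercivity on $\ker B$ is essentially the same as the paper's (in particular, the observation that $q\in\ker B$ forces $\partial_s q=0$ and $\jump{q}_j=0$, so $\|q\|_V^2=\|q\|_{L^2(\E)}^2=a(q,q)$).

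The substantive divergence is in the inf--sup construction. You propose to build the lift $q$ directly: antidifferentiate $p_\E$ on each edge and then choose per-edge constants $c_i$ via an incidence-matrix system so that the jumps hit the target $\jump{q}_j=-\alpha_j^2\ell^{-2}p_\V(v_j)$. You correctly flag the ``sensitive point'': bounding the $c_i$ (and hence $\|q\|_{L^2(\E)}$) uniformly in the graph topology. But you do not actually carry this out; you only assert that the weight choice \eqref{eq:degree} makes it work. This is a genuine gap, because the incidence system is rectangular ($n$ unknowns, $m$ constraints), its solvability depends on the boundary/compatibility structure you only allude to, and a uniform bound on its minimum-norm solution is not obvious.

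The paper avoids this difficulty entirely by a different route: it invokes the already-proved primal well-posedness (\Cref{thm:primal}) and higher regularity (\Cref{thm:higher-regularity}) to produce $\psi^p\in H^1(\E)$ and $\phi^p\in H^1_0(\Lambda)\cap H^2(\E)$ solving $\psi^p+\nabla_\mathcal{G}\phi^p=0$, $\DivG\psi^p=\ell^{-2}p$. The $L^2(\E)$-bound on $\psi^p$ then follows from the \emph{edge-wise} Poincar\'e constant $C_{p,i}\sim\ell_i$ combined with $\sum_i\ell_i^2\le\ell^2$ (not a single Poincar\'e constant with scale $\ell$, as your sketch suggests), and the jump term is controlled by the edge-wise trace inequality $C_{t,i}\sim\ell_i^{-1/2}$, with the definition of $\alpha_j$ entering precisely to cancel the resulting sum $\sum_j\alpha_j^{-2}\sum_{i\in E(v_j)}\ell_i^{-1}$. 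This recycling of the primal result is what buys uniformity without ever touching an incidence-matrix inverse; if you want your direct construction to go through, you would need to supply the analogous uniform bound on the $c_i$ explicitly.
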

\begin{proof}
It is straightforward to show that the forms $a$ and $b$ are uniformly continuous with respect to the weighted norms on $V$ and $M$. Next, we show that $a$ is uniformly coercive on the kernel $K \subset V$ defined by:
\begin{align}
    K = \{  \psi \in V: b(\psi, p)&=0 \text{ for all } \p \in M  \}.
\end{align}
Consider any $\psi \in K$, and take $p_{\psi}=(p_\E, \p_{\V}) \in L^2(\mathcal{G})$ with $p_\E = \ell^2 \partial_s \psi$ and $p_\V=0$. A calculation then shows that
\begin{align}
    b(\psi, p_\psi)=(\nabla_\mathcal{G}\cdot \psi,p_\psi)_\mathcal{G} =  \norm{\ell \partial_s \psi}{L^2(\E)}^2 = 0 . 
\end{align}
Similarly, taking $p_\E = 0$ and $p_\V=l^2 \alpha^{-2} \jump{\psi}$ give that
\begin{align}
    b(\psi, p_\psi)=(\nabla_\mathcal{G}\cdot \psi,p_\psi)_\mathcal{G} = \norm{l \alpha^{-1} \jump{\psi}}{L^2(\V)}^{2} = 0 .
\end{align}
Thus,
\begin{align}
  a(\psi, \psi) = \norm{\psi}{L^2(\mathcal{E})}^2
  = \norm{\psi}{L^2(\mathcal{E})}^2 +  \norm{\ell \partial_s \psi}{L^2(\mathcal{E})}^2 + \norm{\ell \alpha^{-1} \jump{\psi}}{L^2(\mathcal{V})}^2
  = \norm{\psi}{V}^2 ,
\end{align}
and $a$ is uniformly coercive on $K$.

It remains to show that the inf-sup condition holds; i.e.,~that there exists a $\beta > 0$ such that
\begin{align}
  \sup_{\psi \in V} \frac{b(\psi,\p)}{\norm{\psi}{V}} \geq \beta \norm{\p}{M} \text{ for all } \p \in M .
  \label{eq:dual:inf-sup}
\end{align}
The proof is by construction of a suitable $\psi^p \in V$ so that
\begin{align}
  \norm{\psi^p}{V} \lesssim \norm{p}{M} 
  \label{eq:temp4}  
\end{align}
where we use $\lesssim$ to denote $\norm{\psi^p}{V} \leq C \norm{p}{L^2(\mathcal{G})}$ for some constant $C>0$ that is independent of the domain. To this end, fix $p \in M$. By Theorem \ref{thm:primal} and \ref{thm:higher-regularity}, there exists $\psi^p \in H^1(\E)$ and $\phi^p \in H^1(\Lambda) \cap H^2(\E)$ solving
\begin{gather}
  \begin{aligned}
    \psi^p + \nabla_\mathcal{G} \phi^p &= 0 &&\text{ on } \E, \\
    \nabla_\mathcal{G} \cdot \psi^p &= \ell^{-2} p &&\text{ on } \mathcal{G}.
  \end{aligned}
\end{gather}
To show \eqref{eq:temp4}, recall that
\begin{align}
  \norm{\psi^p}{V}^2  
  &= \norm{\psi^p }{L^2(\mathcal{E})}^2+
  l^{2} \norm{\partial_s \psi^p }{L^2(\mathcal{E})}^2 + l^{2} \norm{\alpha^{-1} \jump{\psi^p} }{L^2(\mathcal{V})}^2. \label{eq:tempV}
\end{align}
To bound the first term, note that for each edge $\Lambda_i$, there exists a $C_{s, i} > 0$, such that $\norm{\phi^p}{H^1(\Lambda_i)} \leq C_{s,i} \norm{\ell^{-2} p}{L^2(\Lambda_i)}$ for each edge $\Lambda_i$. Here, $C_{s, i}$ depend on the Poincar{\'e} constant $C_{p, i}$ of the domain $\Lambda_i$, and $C_{p, i} \sim \ell_i$~\cite{Arnold2009StabilityOL, kennedy2016spectral}. Thus 
\begin{gather}
    \begin{aligned}
      \norm{\psi^p }{L^2(\mathcal{E})}^2 &= \norm{\partial_{s} \phi^p }{L^2(\mathcal{E})}^2 \leq \norm{\phi^p }{H^1(\mathcal{E})}^2
      \leq  \sum_{i=1}^n C_{s,i}^2 \norm{\ell^{-2} p }{L^2(\Lambda_i)}^2 \\
      & \lesssim \sum_{i=1}^n \ell_i^2 \norm{\ell^{-2} p }{L^2(\Lambda_i)}^2
      \leq \norm{\ell^{-2} p }{L^2(\E)}^2 \sum_{i=1}^n \ell_i^2  \leq \ell^2 \norm{\ell^{-2} p }{L^2(\E)}^2 \\
      &= \norm{\ell^{-1} p }{L^2(\E)}^2.
      \label{eq:temp1}
    \end{aligned}
\end{gather}
The second term can be bounded by using that $\partial_s \psi^p = \ell^{-2} p$ edgewise:
\begin{align}
  \ell^{2} \norm{\partial_s \psi^p }{L^2(\mathcal{E})}^2 
  = \ell^{2} \norm{\ell^{-2} p }{L^2(\mathcal{E})}^2= \norm{\ell^{-1} p }{L^2(\mathcal{E})}^2.
  \label{eq:temp2}
\end{align}
To handle the third term, involving jumps of the solution across vertices, we use the trace inequality: there exists $C_{t, i} > 0$ such that
\begin{align}
  \norm{\psi^p_i}{L^2(\partial \Lambda_i)} \leq C_{t,i} \norm{\partial_s \psi^p_i}{L^2(\Lambda_i)} .
  \label{eq:trace-line}
\end{align}
The trace constant scales as $C_{t,i} \sim \ell_i^{-1/2}$. Thus
\begin{gather}
\begin{aligned}
  \ell^{2} \norm{\alpha^{-1} \jump{\psi^p} }{L^2(\mathcal{V})}^2 
            &\leq  \ell^{2} \sum_{\vv_j \in \V}  \alpha_j^{-2} 
  \sum_{\Lambda_i \in E(\vv_j)}\vert \psi^p_i(\vv_j)\vert^2  \\
            &\leq \ell^{2} \sum_{\vv_j \in \V}  \alpha_j^{-2} 
  \sum_{\Lambda_i \in E(\vv_j)} C_{t,i}^2 \norm{\partial_s \psi_i^p}{L^2(\Lambda_i)}^2  \\
            & \lesssim \ell^{2}\norm{\partial_s \psi^p}{L^2(\E)}^2 \sum_{\vv_j \in \V}  \alpha_j^{-2} 
 \sum_{\Lambda_i \in E(\vv_j)}  \ell_i^{-1}  \\
            &= \norm{\ell^{-1} p}{L^2(\E)}^2 \sum_{\vv_j \in \V}   \alpha_j^{-2} 
 \sum_{\Lambda_i \in E(\vv_j)} \ell_i^{-1}  \text{  (using \eqref{eq:temp2})} \\
            &= \norm{\ell^{-1} p}{L^2(\E)}^2  \sum_{\vv_j \in \V} \frac{1}{m}  \frac{\sum_{e_i \in E(\vv_j)} \ell_i}{\sum_{e_i \in E(\vv_j)} \ell_i} \\
            &= \norm{\ell^{-1}p}{L^2(\E)}^2.
            \label{eq:temp3}
\end{aligned}
\end{gather}
Combining \eqref{eq:temp1}--\eqref{eq:temp3} then gives \eqref{eq:temp4}, and thus we find that
\begin{align}
  \sup_{\psi \in V} \frac{b(\psi,\p)}{\norm{\psi}{V}} \geq \frac{b(\psi^p,\p)}{\norm{\psi^p}{V}}
  = \frac{(\nabla_\mathcal{G} \cdot \psi^p,\p)}{\norm{\psi^p}{V}}
  =\frac{(\ell^{-1} \p, \ell^{-1}\p)_{L^2(\mathcal{G})}}{\norm{\psi^p}{V}}=\frac{\norm{\p}{M}^2}{\norm{\psi^p}{V}} \geq \beta \norm{\p}{M}
\end{align}
and the inf-sup condition~\eqref{eq:dual:inf-sup} holds.
\end{proof}

\subsubsection{Stability and robustness of families of dual discretizations}
Next, we consider finite element discretizations $V^h \times M^h \subset V \times M$ of the dual mixed hydraulic network model given by \eqref{eq:abstract} with \eqref{eq:a-to-F}. We let
\begin{subequations}
  \label{eq:mixed_spaces}
  \begin{align}
    V^h &= CG_k(\Lambda_1^h) \times CG_k(\Lambda_2^h) \times \dots \times CG_k(\Lambda_n^h), \\
    M^h &= DG_{k-1}(\Lambda^h) \times \mathbb{R}^m ,
  \end{align}
\end{subequations}
which correspond to branch-wise Raviart--Thomas-type spaces with the flux glued together using Lagrange multipliers at the internal vertices of the network. For the dual 
Stokes--Brinkman network model, we consider the same $V^h$ but instead consider continuous pressures; i.e., the pairing
\begin{subequations}
  \label{eq:mixed_spaces_TH}
  \begin{align}
    V^h &= CG_k(\Lambda_1^h) \times CG_k(\Lambda_2^h) \times \dots \times CG_k(\Lambda_n^h), \\
    M^h &= CG_{k-1}(\Lambda^h) \times \mathbb{R}^m,
  \end{align}
\end{subequations}
which correspond to branch-wise Taylor--Hood-type elements. 

\begin{remark}
  The dual mixed hydraulic network model constitutes the same variational formulation as was derived by \citet{cerroni2018mixed}. Therein, the vertex values of $p$ were introduced as Lagrange multipliers enforcing conservation of mass, and the system was discretized using branch-wise Taylor--Hood elements. However, our numerical experiments indicate that this choice is not inf-sup stable. Instead, we find that the hydraulic network model should be discretized using branch-wise Raviart--Thomas, while the dual mixed Stokes--Brinkman formulation could be discretized using branch-wise Taylor--Hood.
\end{remark}

\begin{remark}[Connection to non-conforming hybridized methods]
    We note that with \eqref{eq:mixed_spaces} the discrete dual mixed formulation is closely
  related to the non-conforming (hybridized) mixed methods for the Darcy equation \cite{Arnold1985MixedAN} where
  element-local $H(\text{div})$ spaces are glued across facets by Lagrange multipliers. Applying these ideas to our network setting, where the roles of elements/cells and facets are played respectively by graph edges 
  and vertices, yields the norms (cf. \ref{eq:mixed_norm_C})
  \begin{subequations}
    \label{eq:mixed_norm}
    \begin{align}
      \norm{q}{V^h}^2 &= \norm{q}{L^2(\mathcal{E})}^2 + \norm{\partial_s q}{L^2(\mathcal{E})}^2 + \sum_{j=1}^m h^{-1}_j\jump{q}_j^2, \\
      \norm{p}{M^h}^2 &= \norm{p_{\mathcal{E}}}{L^2(\mathcal{E})}^2 + \sum_{j=1}^m h_j p_{\mathcal{V}}^2.
    \end{align}%
  \end{subequations}%
  Here, for any internal vertex $b$, $h_j$ denotes the mean length of finite element cells in $\Lambda_i^h$
  connected to $b$. Thus $h_j$ depends on the mesh and the degree of the node. 
\end{remark}

Now, we  examine numerically the robustness and conditioning of the dual discretization. Using lowest order elements $k=1$ in the family of discretizations \eqref{eq:mixed_spaces}, 
\Cref{tab:dual_simple}
and \Cref{tab:dual-infsup} report respectively the condition numbers of the dual mixed formulation using the unweighted norms (in particular 
the $V$ norm \eqref{eq:simple_hdiv_norm}) and the domain-dependent norms \eqref{eq:mixed_norm_C}. In both cases the condition numbers appear to be stable in $h$, however, only the weighted norms lead to boundedness also in the number of bifurcations for different graph configurations (tree, honeycomb). Note that the length of the graph $\ell$ increases with the number of generations in these graph configurations. For the honeycomb networks, $\ell$ grows from approximately $\ell=6$ to $\ell=16$ between the first and final generations, while for the tree graphs $17 < \ell < 50$.

Let us finally comment on robustness and stability with respect to the resistance parameter. This is of particular interest for simulations of flow in branching networks, where the cross-section size typically reduces at each branching generation. In this case, we may apply results from $\mathcal{R}$-robust Darcy preconditioners \cite{badia2010stabilized} to propose the following norms for the solution spaces (instead of~\eqref{eq:mixed_norm_C}): 
\begin{subequations}
  \label{eq:mixed_norm_R}
    \begin{align}
      \lVert q \rVert^2_{V} &= \lVert \mathcal{R}^{1/2} q \rVert^2_{L^2(\mathcal{E})}
      + \lVert \ell \mathcal{R}^{1/2} \partial_s q \rVert^2_{L^2(\mathcal{E})}
      + \norm{ \ell \mathcal{R}_j^{1/2} \alpha^{-1}\jump{q}}{L^2(\V)}^2 ,\\
      \lVert p \rVert^2_{M} &= \lVert \ell^{-1} \mathcal{R}^{-1/2} p_{\mathcal{E}} \rVert^2_{L^2(\mathcal{E})}
      +  \norm{\ell^{-1} \alpha \mathcal{R}_j^{-1/2} p_{\V}}{L^2(\V)}^2.
    \end{align}
\end{subequations}
Here $\mathcal{R}_j$ represents the mean resistance at bifurcation point $\vv_j$ defined by averaging over connected branches. The robustness of the mixed formulation with norms \eqref{eq:mixed_norm_R} is demonstrated numerically in \Cref{tab:dual-infsup-kappa}.

\begin{table}
  \begin{subtable}{0.45\textwidth}
    \footnotesize
    \centering
    \begin{tabular}{l|*{4}{c}}
      \toprule
\backslashbox{$n$}{$h$}
&\makebox[2em]{1}&\makebox[2em]{0.5}&\makebox[2em]{0.25}
&\makebox[2em]{0.125} \\
\midrule
%
1  & 29.659 & 34.316 & 36.349 & 36.959\\
3  & 51.474 & 56.193 & 57.820 & 58.306\\
7  & 68.691 & 73.171 & 74.593 & 74.984\\
15 & 81.268 & 85.209 & 86.381 & 86.691\\
28 & 89.779 & 93.219 & 94.197 & 94.450\\
50 & 93.619 & 96.669 & 97.525 & 97.745\\
  \bottomrule
\end{tabular}
\caption{Tree networks}
\end{subtable}
\caption{Condition numbers for the dual mixed discretizations with 
unweighted norms of $V\times M$, i.e. $l=1$, $\alpha=1$ 
in \eqref{eq:mixed_norm_C}. Preconditioning based on these 
norms yields linear systems which become stiffer as the network length and complexity grow.
}
\label{tab:dual_simple}
\end{table}

\begin{table}
  \begin{subtable}{0.45\textwidth}
    \footnotesize
    \centering
    \begin{tabular}{l|*{4}{c}}
      \toprule
\backslashbox{$n$}{$h$}
&\makebox[2em]{1}&\makebox[2em]{0.5}&\makebox[2em]{0.25}
&\makebox[2em]{0.125} \\
\midrule
%
  1   &    2.679 &    3.914 &    3.613 &    3.997 \\   
  3   &    2.698 &    3.935 &    3.639 &    4.014 \\  
  7   &    2.704 &    3.942 &    3.648 &    4.02  \\  
  15  &    2.706 &    3.944 &    3.650  &    4.022 \\  
  28  &    3.877 &    4.123 &    4.091 &    4.126 \\  
  50  &    4.034 &    4.122 &    4.110  &    4.125 \\  
  108 &    4.033 &    4.127 &    4.109 &    4.128 \\  
  \bottomrule
\end{tabular}
\caption{Tree networks}
\end{subtable}
  \hspace{3em}
  \hfill
  \begin{subtable}{0.45\textwidth}\footnotesize
\centering
\begin{tabular}{l|*{4}{c}}
  \toprule
\backslashbox{$n$}{$h$}
&\makebox[2em]{1}&\makebox[2em]{0.5}&\makebox[2em]{0.25}
&\makebox[2em]{0.125} \\
\midrule
%
  8    &3.659   &3.990   &3.909   &4.021\\
  18   &3.729   &3.997   &3.932   &4.026\\
  32   &3.775   &4.009   &3.947   &4.031\\
  50   &3.812   &4.013   &3.961   &4.032\\
  72   &3.815   &4.018   &3.969   &4.035\\
  98   &3.845   &4.020   &3.980   &4.036\\
  128  &3.839   &4.022   &3.981   &4.038\\
  \bottomrule
\end{tabular}
\caption{Honeycomb networks}
\end{subtable}
\caption{Condition numbers for the dual mixed discretizations with norms given by \eqref{eq:mixed_norm_C}. Computations were performed on tree networks (a) and honeycomb networks (b), with $n$ denoting the number of internal graph vertices and $h$ denoting the mesh size. The resistance parameter was set to $\mathcal{R}=1$.}
\label{tab:dual-infsup}
\end{table}

\begin{table}
\begin{subtable}{0.75\textwidth} \footnotesize 
\begin{tabular}{l|*{7}{c}}\toprule
\backslashbox{$n$}{$\mathcal{R}$}
&\makebox[3em]{$10^{-8}$}&\makebox[3em]{$10^{-4}$}&\makebox[3em]{$10^{-2}$}&\makebox[3em]{$1$}&\makebox[3em]{$10^{2}$}&\makebox[3em]{$10^{4}$}&\makebox[3em]{$10^{8}$}\\\midrule
  1  &     2.679 &       2.679 &     2.679 &    2.679 &      2.679 &        2.679 &            2.679 \\
  3  &     2.698 &       2.698 &     2.698 &    2.698 &      2.698 &        2.698 &            2.698 \\
  7  &     2.704 &       2.704 &     2.704 &    2.704 &      2.704 &        2.704 &            2.704 \\
  15 &     2.706 &       2.706 &     2.706 &    2.706 &      2.706 &        2.706 &            2.706 \\
  28 &     3.877 &       3.877 &     3.877 &    3.877 &      3.877 &        3.877 &            3.877 \\
  50 &     4.034 &       4.034 &     4.034 &    4.034 &      4.034 &        4.034 &            4.034 \\
  108&     4.033 &       4.033 &     4.033 &    4.033 &      4.033 &        4.033 &            4.033 \\
  \bottomrule
\end{tabular}
  \end{subtable}
\caption{
  Conditioning of the dual mixed formulation with norm \eqref{eq:mixed_norm_R} for different values of resistance
  parameter (constant in space). Tree networks are considered with $n$ denoting the number of internal graph vertices.
  The mesh size is fixed at $h=1$.
}
\label{tab:dual-infsup-kappa}
\end{table}

\subsection{Approximation and convergence of primal and dual discretizations}
\label{sec:graph-conv}

To examine the approximation properties of the primal and dual mixed hydraulic network models, we compute the error and convergence rates against an analytic solution of  a simple bifurcation problem. To be more precise, let $\vv_1=(0,0)$, $\vv_1=(0,0.5)$, $\vv_2=(-0.5,1)$ and $\vv_3=(0.5,1)$. From these vertices, we create a Y-shaped (bifurcating) graph by setting $e_1=(\vv_1, \vv_2)$, $e_2=(\vv_2, \vv_3)$ and $e_2=(\vv_2, \vv_3)$. Each edge is associated with a resistance $\mathcal{R}=1$ and a cross-section area $A=1$. Letting $s$ denote the distance from the root node $\vv_1$, we take
\begin{align*}
  q &= 
  \begin{cases}
    1 + \cos(\pi s) +\sin(2\pi s) & \text{ on } e_1,\\
    \frac{1}{2} + \cos(\pi s) +\sin(2\pi s) & \text{ on } e_2, e_3,
  \end{cases} \\
  p &= \sin(\pi s)+\cos(2\pi s) \text{ on } e_1,e_2,e_3 . 
\end{align*}
as analytic solutions; inserting these in \eqref{eq:1d-hyd} gives the associated values for $f$ and $g$. Finally, we use the analytic solution pressure to impose suitable pressure boundary conditions. We note that $p$ is smooth on all of $\Lambda$. Contrarily, $q$ is smooth on all edges $\Lambda_i$, but discontinuous across the bifurcation. 

Table \ref{tab:spatial_conv} shows the errors and convergence rates associated with these discrete solutions. The primal mixed approximation shows order $k$ convergence for $k = 1, 2, 3$ for both flux and pressure, measured in the $L^2(\Lambda)$- and $H^1(\Lambda)$-norms, respectively. This agrees with the expected rates for standard finite element methods \cite{brenner}.

For $k=1$, the dual mixed approximation similarly shows order one convergence of the pressure and flux, now measured in the $L^2$- and $H(\mathrm{div}; \mathcal{G})$-norms, respectively.  Increasing the degree, we find that the flux approximation enjoys $k$-order convergence in the $H(\mathrm{div}; \mathcal{G})$-norms, while the pressure error converges at a maximum rate of two. \citet{egger2023hybrid}[Lemma 4] showed that higher order convergence for a similar numerical method is possible in a single vessel. Indeed, repeating the convergence test on a single vessel (no bifurcations), we found that the optimal $k$-order convergence was restored. The lack of higher-order convergence is therefore likely a consequence of the bifurcation condition.

\begin{table}
\small
\centering
\begin{subtable}{0.6\textwidth}
\centering

\footnotesize $k=1:$ \hspace{1em}
\begin{subtable}{0.6\textwidth} \footnotesize
\centering
\begin{tabular}{l | l l}
$h$     & $\norm{q}{L^2(\Lambda)}$  &$\norm{p}{H^1(\Lambda)}$ \\ \hline
3.1e-02 &        1.5e-01         &               1.8e+00        \\
1.6e-02 &        7.2e-02   (1.0) &               9.0e-01   (1.0) \\
7.8e-03 &        3.6e-02   (1.0) &               4.5e-01   (1.0) \\
3.9e-03 &        1.8e-02   (1.0) &               2.2e-01   (1.0) 
\end{tabular}
\end{subtable}
\vspace{2em}

\footnotesize $k=2:$ \hspace{1em}\begin{subtable}{0.6\textwidth} \footnotesize
\begin{tabular}{l | l l}
$h$     & $\norm{q}{L^2(\Lambda)}$ & $\norm{p}{H^1(\Lambda)}$ \\ \hline
3.1e-02 &        1.0e-02         &               1.3e-01         \\
1.6e-02 &        2.6e-03   (2.0) &               3.2e-02   (2.0) \\
7.8e-03 &        6.4e-04   (2.0) &               8.1e-03   (2.0) \\
3.9e-03 &        1.6e-04   (2.0) &               2.0e-03   (2.0) 
\end{tabular}
\end{subtable}
\vspace{2em}

\footnotesize $k=3:$ \hspace{1em} 
\begin{subtable}{0.6\textwidth} \footnotesize
\begin{tabular}{l | l l}
$h$     & $\norm{q}{L^2(\Lambda)}$ &$\norm{p}{H^1(\Lambda)}$ \\ \hline
3.1e-02 &        4.5e-04         &               9.3e-03         \\
1.6e-02 &        5.7e-05   (3.0) &               1.2e-03   (3.0) \\
7.8e-03 &        7.1e-06   (3.0) &               1.5e-04   (3.0) \\
3.9e-03 &        8.8e-07   (3.0) &               1.9e-05   (3.0) 
\end{tabular}
\end{subtable}

\caption{Primal mixed discretization}
\label{fig:spatial_conv_primal}
\end{subtable}
\begin{subtable}{0.39\textwidth}
\centering
\begin{subtable}{0.9\textwidth} \footnotesize
\centering
\begin{tabular}{l | l  l }
$h$    & $\norm{q}{H(\text{div};\mathcal{G})}$ &$\norm{p}{L^2(\mathcal{G})}$ \\ \hline
3.1e-02 &     8.6e-01         &       7.4e-02   (0.0) \\
1.6e-02 &     4.3e-01   (1.0) &       3.5e-02   (1.1) \\
7.8e-03 &     2.2e-01   (1.0) &       1.7e-02   (1.0) \\
3.9e-03 &     1.1e-01   (1.0) &       8.6e-03   (1.0) 
\end{tabular}
\end{subtable}
\vspace{2em}

\begin{subtable}{0.9\textwidth} \footnotesize
\begin{tabular}{l | l l }
$h$    & $\norm{q}{H(\text{div};\mathcal{G})}$ &$\norm{p}{L^2(\mathcal{G})}$ \\ \hline
3.1e-02 &     2.8e-02         &       6.6e-03         \\
1.6e-02 &     7.0e-03   (2.0) &       1.6e-03   (2.0) \\
7.8e-03 &     1.8e-03   (2.0) &       4.1e-04   (2.0) \\
3.9e-03 &     4.4e-04   (2.0) &       1.0e-04   (2.0) \\
\end{tabular}
\end{subtable}
\vspace{2em}

\begin{subtable}{0.9\textwidth} \footnotesize\begin{tabular}{l |  l l }
$h$     & $\norm{q}{H(\text{div};\mathcal{G})}$ &$\norm{p}{L^2(\mathcal{G})}$ \\ \hline
3.1e-02 &     3.8e-04         &       4.3e-03         \\
1.6e-02 &     4.7e-05   (3.0) &       1.1e-03   (2.0) \\
7.8e-03 &     5.9e-06   (3.0) &       2.6e-04   (2.0) \\
3.9e-03 &     7.4e-07   (3.0) &       6.6e-05   (2.0) 
\end{tabular}
\end{subtable}
\caption{Dual mixed discretization}
\label{fig:spatial_conv_dual}
\end{subtable}
\caption{Approximation errors (and convergence rates) of the (left) primal mixed discretization~\eqref{eq:varform-primal} and (right) dual mixed hydraulic network discretization~\eqref{eq:a-to-F} for an idealized test case, with discrete spaces given by \eqref{eq:primal_h_spaces} and \eqref{eq:mixed_spaces}, respectively, and for $k = 1, 2, 3$. We observe optimal convergence orders for the approximation of the velocity $q$ for both formulations and all $k$, and for the pressure $p$ for both formulations for $k = 1, 2$. For $k = 3$, the primal formulation yields optimal rates also for $p$, while the dual formulation is one order suboptimal.}
\label{tab:spatial_conv}
\end{table}

\section{Flow in physiological perivascular networks}
\label{sec:pvs-results}

We now turn to simulate CSF flow within physiologically meaningful vascular networks. We first address the question of whether (infinitely) long wavelength pulsations of the vascular wall can induce directional net flow in non-trivial perivascular networks (Section~\ref{sec:modelling-sim-pumping}). Next, we study flow in image-based cortical vascular networks extending as a continuous, closed space from the arterial to the venous sides (Section \ref{sec:micro}). This configuration, with a minimum of CSF influx and efflux routes, produces purely oscillatory perivascular flow. Indicates that connection routes between the PVSs and the surrounding tissue are vital in producing directional flow. 
 
For the computational experiments, we consider the time-dependent formulation of the hydraulic network model~\eqref{eq:1d-hyd} to numerically compute the cross-section flux $q$ and pressure $p$. We employ a first-order (implicit Euler) discretization of the time-derivative and a finite element discretization of the analogous primal formulation cf.~\eqref{eq:varform-primal} with the lowest-order discrete spaces given by \eqref{eq:primal_h_spaces} ($k = 1$). The mesh refinement, number of time steps, and number of cycles were increased until the reported numbers were accurate to the first digit. 

\subsection{Can long-wavelength pulsations induce directional flow in perivascular networks?}
\label{sec:modelling-sim-pumping}

Consider a synthetic network of bifurcating blood vessels and surrounding PVSs represented by a graph $\mathcal{G}$. We assume that the network includes $N_{\rm gen}$ generations and at baseline obeys Murray's law; i.e., that the blood vessel radii at each junction satisfy the relation
\begin{equation}
  (R^1_p)^3 = (R^1_{d_1})^3+(R^1_{d_2})^3,
  \label{eq:murray}
\end{equation}
where $R^1_{p}$ and $R^1_{d_1}, R^1_{d_2}$ denote the baseline inner radii of
the parent and two daughter vessels, respectively. To quantify the symmetry of the network, we introduce the branching radius symmetry $\gamma=R^1_{d_1}/R^1_{d_2}$. Each vessel $\Lambda_i$ is scaled such that $\ell_i=10 R^1_{i}$. We assume that the PVS cross-sections are annular, with inner radius $R^1_i$ and outer radius $R^2_i = 3 R^1_i$ at baseline. Moreover, we model the PVS as non-porous ($\varphi=1$) and filled with CSF with viscosity $\nu = 1 \cdot 10^{-6}$m$^2$/s as of water. We set the root vessel radius $R^1_0 = 1$mm.

We model vascular contractions and expansions by prescribing the motion of the inner vascular wall, leaving the outer PVS boundary fixed. To isolate the effect of PVS network structure, we consider vascular pulsations in the form of uniform waves; that is, simultaneous expansions (or contractions) of the inner wall segments by 
\begin{align}
  R^1(s,t) = \left( 1 + \epsilon \sin \left( \frac{t}{T_{\text{cycle}}}\right) \right) R^1(s,0) 
  \label{eq:vasomotion}
\end{align}
with amplitude $\epsilon = 0.1$ and frequency $T_{\text{cycle}}^{-1}=1$ Hz, the latter corresponding to cardiac-induced arterial pulsations~\citep{mestre2018flow}. These changes in the inner radius will push CSF out of (or into) the PVSs. We here allow for CSF to flow freely into the tissue via the PVS inlets and outlets by setting the tissue pressure at a reference pressure ($\tilde{p}^{\text{ref}}=0$) at all boundary vertices. Hence, we tacitly assume that CSF flow from PVS into tissue does not alter tissue pressure. Additionally, recall that changes to the inner radius will change the size of the cross-section, and hence alter the resistance field as per \eqref{eq:resistance}. We initialize the system at rest, $q(0)=0$.

We are interested in quantifying the net flow within the PVS network. The directional net flow $Q(s; t_1, t_2)$ through a point $\boldsymbol{\lambda}(s)$ between the times $t_1$ and $t_2$, and its cycle-average net flow rate $\langle Q(s) \rangle$ are defined by
\begin{equation}
    Q(s; t_2, t_1) = \int_{t_1}^{t_2} q (s, t) \, \mathrm{d}t, \quad
    \langle Q(s) \rangle = \frac{1}{T_\text{cycle}} \int_{t'}^{t'+T_\text{cycle}} q(s, t) \, \mathrm{d}t,
\end{equation}
where $T_\text{cycle}$ denotes pulsation period (time for one cycle) and $t' > 0$ is arbitrary. Naturally, the volume of fluid being displaced depends on the amplitude $\epsilon$ and the length of the vessel $\ell_i$. To measure the directionality of the displaced flow, we split $q$ into oscillatory and directional parts,
\begin{align}
   q = q_{\text{osc}} +q_{\text{dir}},
\end{align}
where $q_{\text{osc}}$ is defined so that its associated net flow $\langle Q_{\text{osc}} \rangle = 0$. Next, we 
define the directionality ratio
\begin{align}
    \eta = \frac{\langle Q(s)\rangle }{\text{max}(q_{\text{osc}})}, 
    \label{eq:directionality}
\end{align}
where $\text{max}(q_{\text{osc}})$ denotes the oscillatory amplitude.

Interestingly, these uniform waves induce oscillatory and directional flow in the Murray networks with more than one generation. Figure \ref{fig:vaso} illustrates this phenomenon in an arterial tree consisting of five generations with $\gamma = 1$. In this visual representation, we have tracked the net flow at the root node and two leaf nodes. As shown, the flow exhibits both oscillatory and directional characteristics. CSF enters via the root node and exits through leaf nodes; with one exception: the node marked in green also sees an influx of CSF. 

The directional net flow depends on the perivascular network configuration (Table \ref{tab:netflow-N-gen-tree}). For a one-generation network ($N_{\rm gen} = 1$, a single vessel), the flow at the inlet remains entirely pulsatile with zero average net flow. However, the net directional flow increases with the number of network generations. Mostly, but not always, networks with larger aspect ratios ($\gamma \ll 1$) admit less net flow. We remark that as $\gamma$ is decreased, the network increasingly resembles a single vessel (with a constant radius), a case in which uniform waves do not induce non-zero net flow. Similar experiments in networks with homogeneous radii (i.e. where all vessels are assigned the same initial radius rather than by Murray's law) yield negligible net flow.  
\begin{figure}
\includegraphics[width=0.6\textwidth]{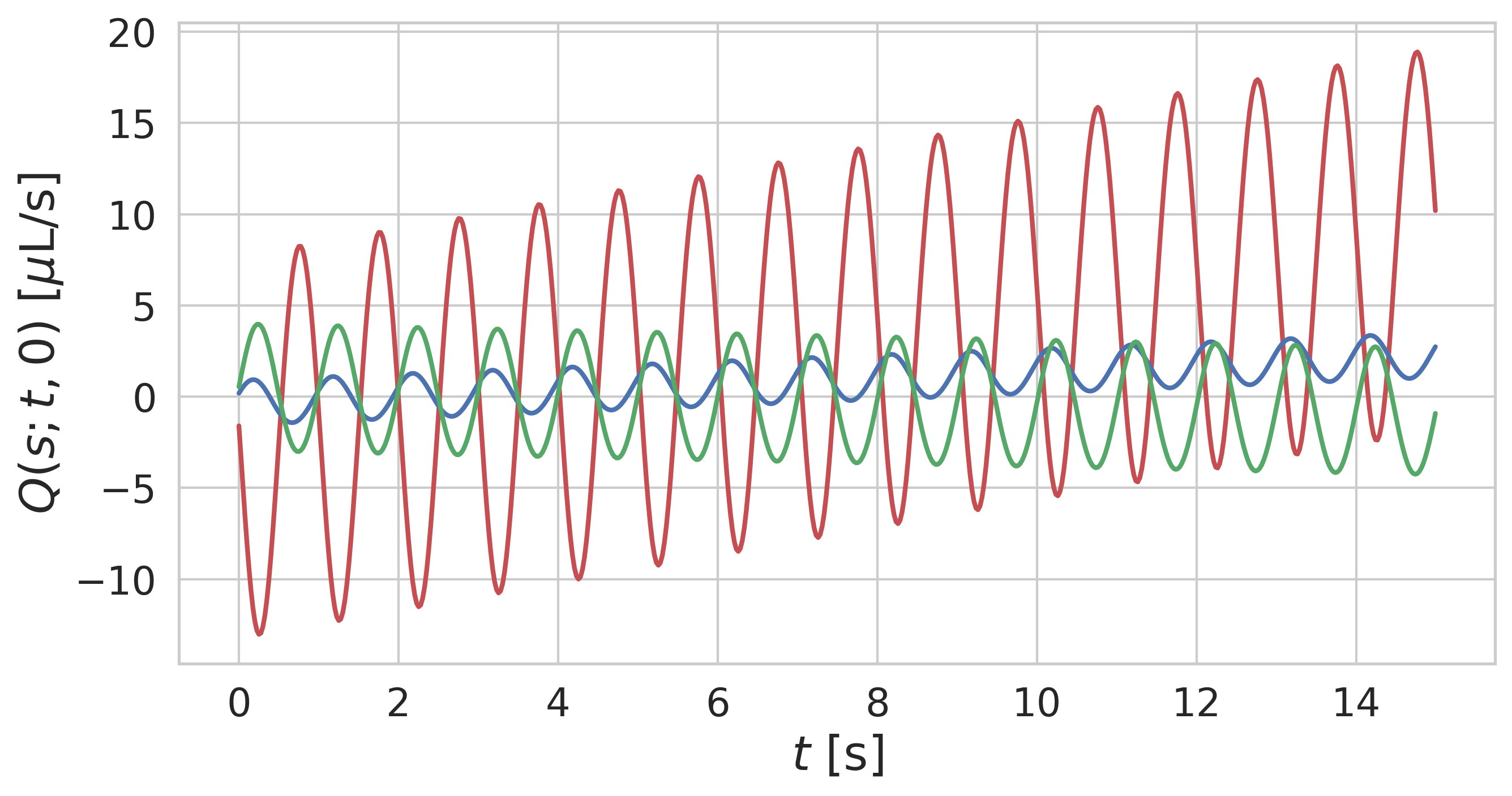}
\begin{tikzpicture}
    \node[anchor=south west, inner sep=0] (image) at (0,0) {\includegraphics[width=0.3\textwidth]{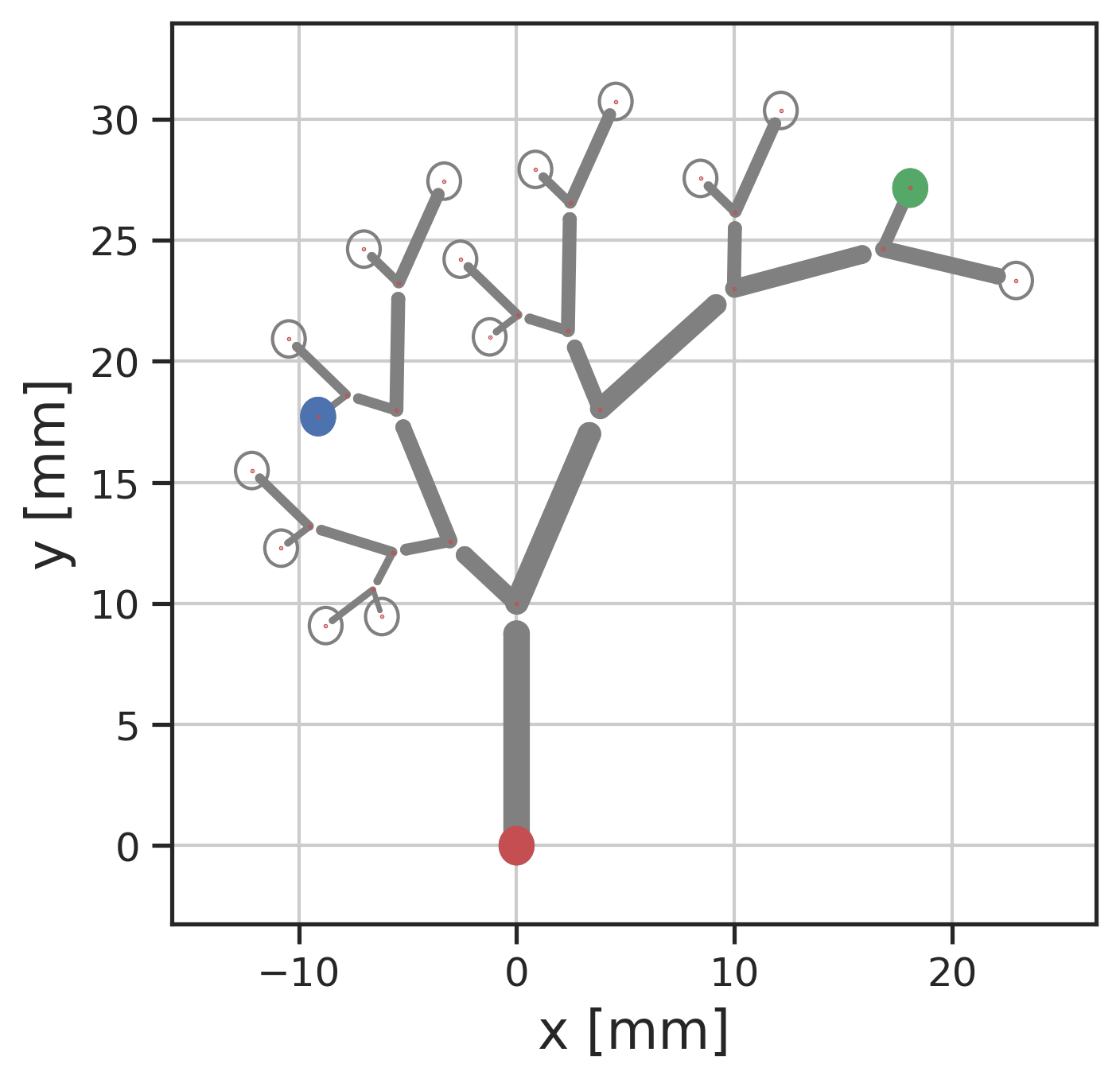}};
        \draw[gray] (1,4) circle (0.08cm); 
        \node[draw=none, text=black] at (2.2,4) {\scriptsize{Inlet/outlets}};
    \end{tikzpicture}
\caption{Net flow $Q(s;t,0)$ (left) over time due to uniform wave pulsations in an aterial pulsations in a five-generational arterial tree (right). The tree has open inlets/outlets, and net flow is tracked through the inlet note (red) and two outlets (blue, green). Arterial pulsation can be seen to drive both oscillatory and directional flow.}
\label{fig:vaso}
\end{figure}

\begin{table}
\centering
\begin{subtable}{0.99\textwidth}
\hspace{10em}
\includegraphics[width=0.1\textwidth,trim=50 60 60 60,clip]{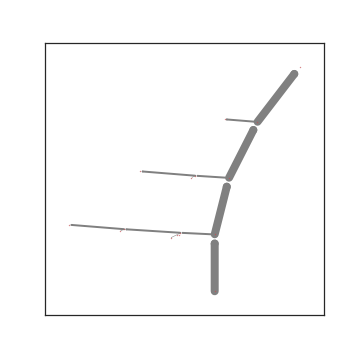} \hspace{3em}
\includegraphics[width=0.1\textwidth,trim=50 60 60 60,clip]{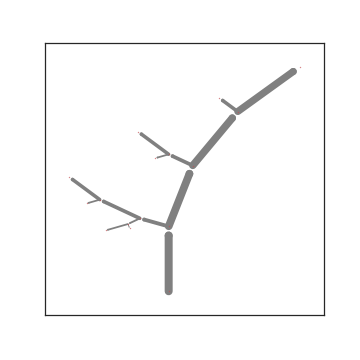}\hspace{3em}
\includegraphics[width=0.1\textwidth,trim=50 60 60 60,clip]{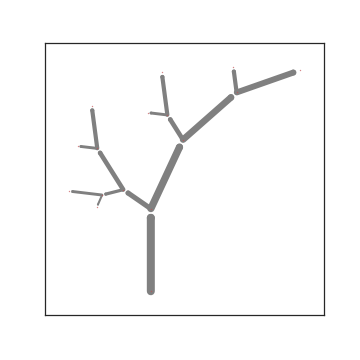}\hspace{3em}
\includegraphics[width=0.1\textwidth,trim=50 60 60 60,clip]{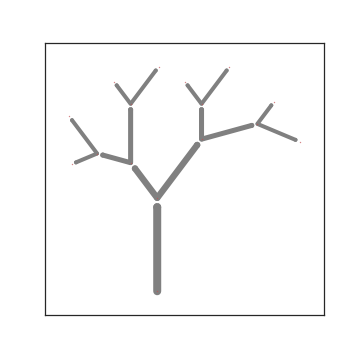}
\hspace{5em}
\end{subtable}
\vspace{0.1em}
\begin{tabular}{|l|*{4}{l}}
\toprule 
\backslashbox{$N_{\text{gen}}$}{$\langle Q \rangle $ ($\eta$)}
&\makebox[3em]{$\gamma=\nicefrac{1}{4}$}&\makebox[3em]{$\gamma=\nicefrac{1}{2}$}&\makebox[3em]{$\gamma=\nicefrac{3}{4}$}&\makebox[3em]{$\gamma=1$} \\
\midrule
       1  &  0.00$\mu$L (0.0\%) & 0.00 $\mu$L (0.0\%) &  0.00$\mu$L (0.0\%) & 0.00$\mu$L (0.0\%) \\
       2  &  0.05$\mu$L (0.4\%) & 0.14$\mu$L (1.3\%) &  0.11$\mu$L (1.1\%) & 0.08$\mu$L (0.9\%) \\
       3  &  0.07$\mu$L (0.4\%) & 0.30$\mu$L (1.8\%) &  0.30$\mu$L (2.1\%) & 0.24$\mu$L (1.8\%) \\
       4  &  0.10$\mu$L (0.4\%) & 0.51$\mu$L (2.4\%) &  0.57$\mu$L (3.1\%) & 0.45$\mu$L (2.8\%)  \\
       5  &  0.14$\mu$L (0.5\%) & 0.71$\mu$L (2.6\%) &  0.80$\mu$L (3.7\%) & 0.81$\mu$L (4.0\%) *  \\
       6  &      0.20$\mu$L (0.5\%)         & 1.00$\mu$L (3.1\%) &  1.20$\mu$L (4.5\%) & 1.20$\mu$L (5.0\%)  \\ 
       \bottomrule
    \end{tabular}    
     \caption{Average net flow and directionality $\eta$ (in parenthesis) at the inlet node induced by uniform inner wall waves in perivascular trees. Simulation setup is as in Figure \ref{fig:vaso}. The branching symmetry parameter $\gamma$ strongly affects the amount of net flow. The entry marked with an asterisk corresponds to the simulation visualized in Figure \ref{fig:vaso}.}
     \label{tab:netflow-N-gen-tree}
\end{table}

\subsection{Net flow is dependent on PVS-tissue connection, and does not occur for continuous arterial-capillary-venous PVSs}
\label{sec:micro}

We now turn to consider an image-based network extracted from a 1mm$^3$ cube of cortical tissue~\cite{goirand2021network, blinder2013cortical} (Figure \ref{fig:micro}). The network is described by the spatial locations and radius of $\sim$15,000 vessels/edges and includes 918 arteries, 216 veins, and 12559 capillaries. The perivascular space is modelled as a continuous space extending from arteries to veins. Arterial vessels are assigned uniform pulsations \eqref{eq:vasomotion} with amplitude $\epsilon = 0.1$ and frequency $T_{\text{cycle}}^{-1}=1$ Hz. Capillaries and veins are assumed to have fixed radii in time.

For this configuration, the simulated flow is purely pulsatile measured at arterial inlets (red curves in Figure~\ref{fig:micro-results}). This is in contrast to the results in the previous section (cf.~Figure \ref{fig:vaso}), where net flow was found to occur in a network with open inlets and outlets. We hypothesize that the minuscule cross-sections of the capillary PVS connecting the arterial and venous sides play an important role for these observations. These give rise to a high resistance in the capillary part of the network. Thus, while this network is endowed with multiple inlets and outlets, the capillary PVS effectively act as a no-flow zone, thus eliminating the route for net fluid movement. Indeed, negligible net flow is observed at the venous outlets (green curves in Figure~\ref{fig:micro-results}). 
\begin{figure}
\label{fig:micro-example}
\begin{subfigure}{0.3\textwidth}
  \includegraphics[width=0.95\textwidth]{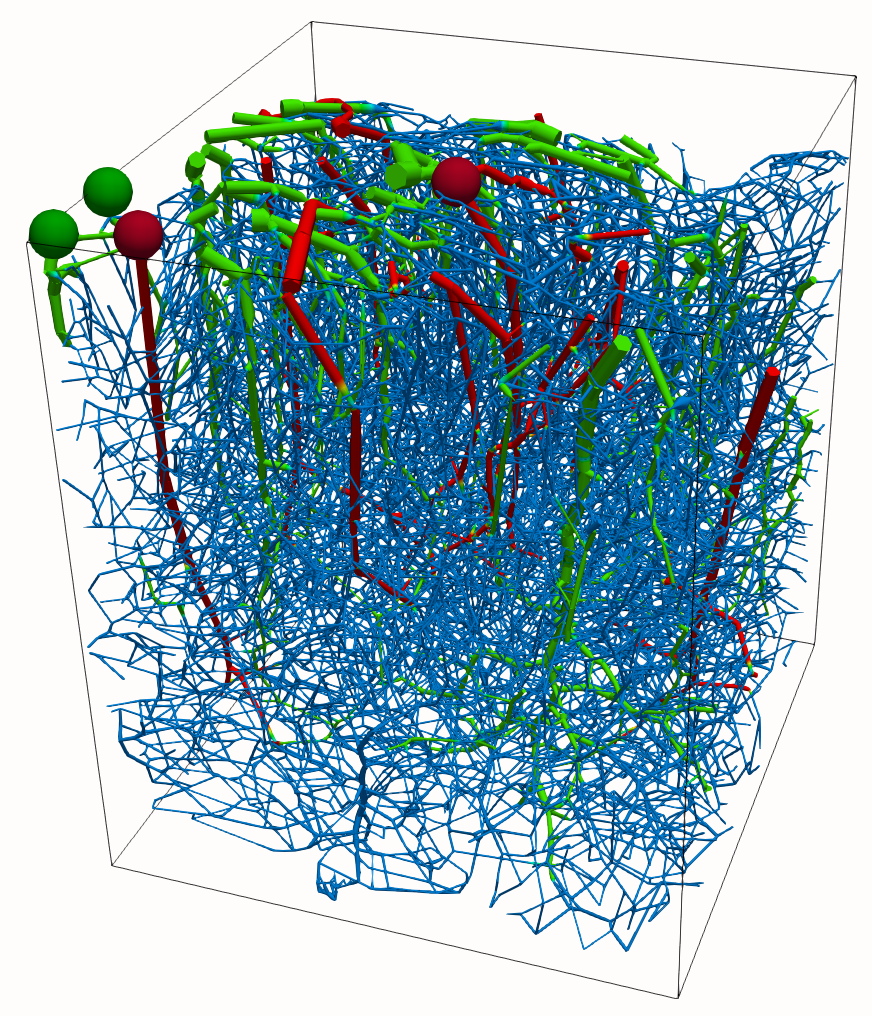}
  \caption{}
  \label{fig:micro}
\end{subfigure}
\hspace{2em}
\begin{subfigure}{0.65\textwidth}
\includegraphics[width=0.95\textwidth]{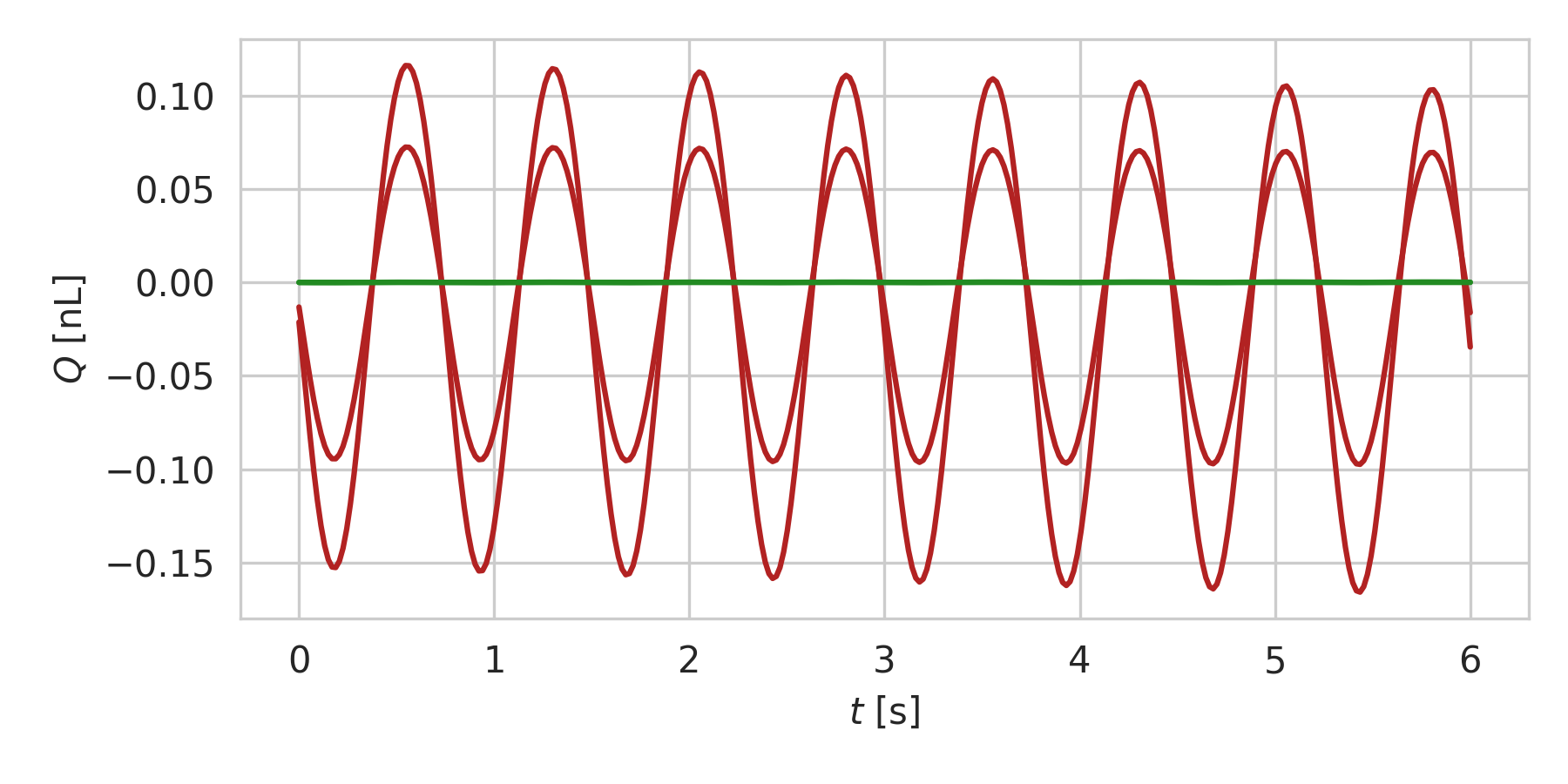}
\caption{}
\label{fig:micro-results}
\end{subfigure}
\caption{Uniform arterial pulsations create purely pulsatile flow in an image-based perivascular network extending continuously from arteries, to capillaries, to veins. The vessels are contained in a 1mm$^3$ cube of cortical tissue; arteries are marked in red, veins in blue, and capillaries in green. Cross-section fluxes were recorded at two arterial inlets and two venous outlets, marked using red and green spheres, respectively. The flow at arterial inlets was found to be purely pulsatile. Negligible flow was recorded at venous outlets, indicating that, with this configuration, the flow induced by arterial perivascular pumping is limited to arterial vessels.}
\end{figure}

\begin{table}
\begin{subtable}{0.18\textwidth}
\begin{tikzpicture}
    \node[anchor=south west, inner sep=0] (image) at (0,0) {
\includegraphics[width=0.9\textwidth]{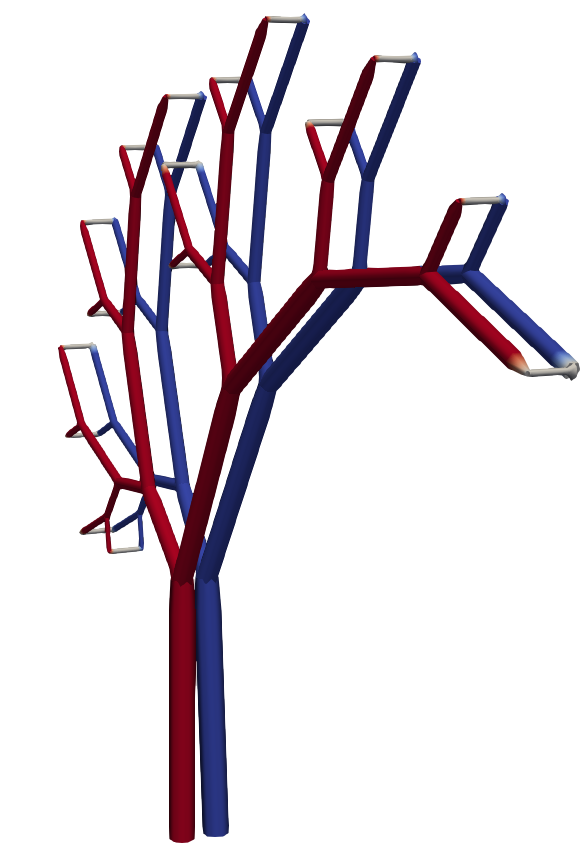} };
        \draw[black,thick] (0.75,0.1) circle (0.1cm); 
        \draw[black,thick] (0.9,0.1) circle (0.1cm); 
        \node[draw=none, text=black] at (0.2,0.1) {\scriptsize{inlet}};
        \node[draw=none, text=black] at (1.6,0.1) {\scriptsize{outlet}};
        \node[draw=none, text=red] at (-0.2,1) {\scriptsize{arteries}};
        \node[draw=none, text=blue] at (1.5,1) {\scriptsize{veins}};
        \node[draw=none, text=teal] at (1,3.5) {\scriptsize{capillaries}};
    \end{tikzpicture} 
\end{subtable}
\hspace{2em}
\begin{subtable}{0.8\textwidth}
\begin{tabular}{l|*{3}{c}}\toprule 
\backslashbox{$N_{\text{gen}}$}{$\langle Q \rangle (\eta)$}
&\makebox[3em]{$\sfrac{R_c^1}{R_{a, \text{min}}^1}=1$}&\makebox[3em]{$\sfrac{R_c^1}{R_{a, \text{min}}^1}=0.1$}&\makebox[3em]{$\sfrac{R_c^1}{R_{a, \text{min}}^1}=0.01$} \\
\midrule 
       3  &  0.2$\mu$L (0.9\%) & 0.0 $\mu$L (0.0\%)& 0.0 $\mu$L (0.0\%)  \\
       4  &  0.4$\mu$L (1.3\%) & 0.0 $\mu$L (0.0\%)& 0.0 $\mu$L (0.0\%)  \\
       5  &  0.7$\mu$L (1.8\%) & 0.0 $\mu$L (0.0\%) & 0.0 $\mu$L (0.0\%)  \\
       \bottomrule
    \end{tabular}    
    \end{subtable}
    \caption{Uniform arterial contraction/expansion waves introduce negligible net flow in a connected arterial-capillary-venous PVS. The PVS is modelled as an annular space surrounding an idealized vascular network, with one arterial root node and one venous root node. As the capillary inner radius $R_c^1$ is reduced relative to the minimum arterial radius $R_{a, \text{min}}^1$, the capillary resistance increases and net flow is disrupted. This indicates that influx and efflux routes are necessary to produce net flow.}
    \label{tab:connectome}
\end{table}

To better understand these observations, we repeat these simulations but in an idealized PVS network extending continuously from arteries to capillaries to veins. The arterial and venous sides are modelled as identical vascular trees with $N_\text{gen}$ generations, and connected via edges acting as capillaries (Figure ~\ref{tab:connectome}). To assess the impact of capillary resistance on net flow, we adjust the inner radius $R_c^1$ of the capillary vessels to be a given fraction of the minimum arterial inner radius $R_{a, \text{min}}^1$.
Table \ref{tab:connectome} reports the net flow and directionality ratio $\eta$ induced by arterial pulsations with this configuration. Indeed, the directional flow component quickly vanishes as the capillary radii shrinks, increasing the resistance of the capillary vessels.

\section{Discussion}
\label{sec:discussion}

The results of this paper are three-fold. First, we present a rigorously derived Stokes--Brinkman network model for representing fluid flow in open or porous perivascular spaces with generalized annular cross-sections. Second, we study the existence, uniqueness and stability of solutions and numerical approximations to these equations. Specifically, we prove that the approximations converge uniformly with respect to the network topology and cardinality in appropriately weighted norms. Third, by simulating CSF flow in perivascular networks, we find that uniform wave pulsations may induce directional net flow given sufficient fluid influx and efflux pathways.

The observation that spatially synchronous pulsations of the blood vessel wall (uniform waves) at the frequency of cardiac pulsations (1Hz) can induce directional net flow of CSF in the perivascular spaces is notable. Experimental observations of rapid solute transport along brain surface arteries in lockstep with the movements of the arterial walls have pointed at the presence of such perivascular flow \cite{mestre2018flow, bedussi2018paravascular}. However, these findings have been hard to reconcile with modelling based on computational fluid dynamics. Many theoretical and computational studies have found that the long wavelength of arterial wall pulsations ($\sim$100mm) compared to the shorter typical vessel length ($\leqslant$ 1 mm) does not admit net flow by perivascular pumping~\cite{asgari2016glymphatic, rey2018pulsatile, kedarasetti2020arterial, martinac2019computational, daversin2020mechanisms}. However, if the wave length and vessel length are of comparable size, then the notion that peristaltic pumping can induce non-negligible net flow is also supported by theoretical considerations~\cite{wang2011fluid, thomas2019fluid, gjerde2023directional}. Thus, vascular dynamics at lower wavelengths (and higher amplitudes) such as e.g.~stimulus-evoked or spontaneous vasomotion~\cite{van2020vasomotion, munting2023spontaneous} and their modulations during sleep \cite{bojarskaite2023sleep} can also support net flow~\cite{gjerde2023directional}. Most of these theoretical or computational studies have considered single vessel segments. Our findings indicate that the network architecture plays a significant role. This concept is in agreement with our previous observations \cite{gjerde2023directional} that net flow induced by travelling vascular waves may be amplified or diminished by nonlinear network interactions. 

Another key observation is that the connection between the PVS and tissue is vital to admit directional net flow. In the PVS network configurations where net flow is observed, the network inlets and outlets are open and thus allow for CSF flux into and out of the network. Conversely, when the PVS was modelled as a network continuously extending from the arterial to venous side with only a few influx and efflux routes, a collapse in net flow was observed. Note that anatomically, the outer layer of the PVS is covered by a mosaic of astrocytic endfeet \cite{mathiisen2010perivascular} with inter-endfeet gaps. The endfeet or their gaps may provide an alternative route for the exchange of fluid between the PVS and the surrounding tissue, as also supported by the permeability estimates of \citet{koch2023estimates}. Recent work \cite{gan2023perivascular,bork2023astrocyte} has postulated that these endfeet can act as valves, which could act as an additional mechanism driving net PVS flow.

In terms of modelling limitations, we here consider only motion of the inner perivascular wall, ignoring the elasticity of the surrounding tissue. Moreover, we do not model pressure interactions between the PVS network flux and the surrounding tissue. Both of these aspects would be expected to reduce the net flow observed within the network. In the simulations of the arterial-capillary-venous network, all vessels including capillaries and veins pulsate, which can also be considered an extreme case. 

The model and numerical methods presented here provide a robust and computationally efficient approach to simulate perivascular flow in non-trivial networks. The simulation code, built on \cite{gjerde-joss}, is openly available~\cite{thecode} and provides a solid technological foundation for further computational studies of perivascular fluid flow and transport.  


\section{Acknowledgments}
We thank James Fairbanks, Timothy Hosgood, Ridgway Scott, C\'ecile Daversin-Catty and Alexandra Vallet for their comments and input. Miroslav Kuchta gratefully acknowledges support from the Research Council of Norway, grant 303362. Marie E.~Rognes acknowledges support and funding from the Research Council of Norway (RCN) via FRIPRO grant agreement \#324239 (EMIx) and from the European Research Council (ERC) under the European Union's Horizon 2020 research and innovation programme under grant agreement 714892 (Waterscales). Barbara Wohlmuth gratefully acknowledges the financial support provided by the German Science Foundation (DFG) under project number 465242983 within the priority programme "SPP 2311: Robust coupling of continuum-biomechanical in silico models to establish active biological system models for later use in clinical applications – Co-design of modeling, numerics and usability"  (WO 671/20-1) and (WO 671/11-1).


\appendix

\section{Derivation of the reduced model}
\label{sec:derivation}

In this section, we will show how the three-dimensional Stokes equation \eqref{eq:stokes} can be reduced to the one-dimensional Stokes--Brinkman equation \eqref{eq:1d-pvs-1}-\eqref{eq:1d-pvs-2}. Let $\bar{f}=\bar{f}(s)$ denote the average of a function $f$ over the cross-section boundary $\partial C$, 
\begin{align}
\bar{f}= \frac{1}{\vert \partial C \vert} \int_{\partial C} f \dtheta,
\label{eq:avg-gamma}
\end{align}
and let $\bar{\bar{f}}=\bar{\bar{f}}(s)$ denote the average over the cross-section $C$
\begin{align}
\bar{\bar{f}} = \frac{1}{A} \int_{C} f r\dr \dtheta.
\label{eq:avg-C}
\end{align}
Recalling that the reduced model is posed in terms of the cross-section pressure and the cross-section flux, we can now write this as
\begin{align}
    p = \bar{\bar{p}}^{\text{ref}} \text{ and } q=A \bar{\bar{v}}_s^{\text{ref}}.
\end{align}
Let $R$ be a function so that $R = R^1$ on the inner boundary and $R=R^2$ on the outer boundary of $C$. To move derivatives out of the integrals, we will make use of Reynolds transport theorem, which yields the following relations:
\begin{align}
\int_{C} \pdel s f \, r \dr \dtheta &= \pdel s \bar{\bar{f}} -  \int_{\partial C} \pdel s R f \dtheta, \label{eq:reyn-transport-ds}
\\
\int_{C} \pdel t f \, r \dr \dtheta &= \pdel t \bar{\bar{f}} - \int_{\partial C} w f(s,R,\theta) \dtheta, \label{eq:reyn-transport-dt}
\end{align}
where $w=\ww \cdot \nn$.

\subsection{Reduced conservation equation}
\label{sec:redcons}
Consider a single channel with centerline $\Lambda$. In this section, we show how the conservation equation $\nabla \cdot \vv^{\text{ref}}=0$ can be reduced to a one-dimensional equation for the cross-section flow. 

Recall the Frenet-Serret frame of $\Lambda$ with $\TT, \BB, \NN$. We denote by $X(s),Y(s)$ the cylindrical coordinate system associated with the normal and binormal vectors $\BB$ and $\NN$. We decompose $\vvref=(v_r^\text{ref}, v_\theta^{\text{ref}}, v_s^{\text{ref}})$, where $v_r^\text{ref}$ and $v_\theta^{\text{ref}}$ denote the radial and angular components of $\vvref$ with respect to $X, Y$, and $v_s^{\text{ref}}$ is the component of $\vvref$ in the tangent direction $\TT$.

In the cylindrical coordinate system, the conservation equation then reads:
\begin{align*}
\partial_s v_s^{\text{ref}}  + \frac{1}{r}\pdel{\theta} v_\theta^{\text{ref}} + \frac{1}{r} \left(  \pdel{r} \left(r v_r^\text{ref} \right)\right)=0.
\end{align*}
Fixing $s \in (0, l)$, and integrating over the cross-section $C(s)$, we find
\begin{align*}
\int_{C} \partial_s v_s^{\text{ref}}   r\dr \dtheta
&= \partial_s  \int_{C} v_s^{\text{ref}}   r\dr \dtheta-  \int_{\partial C} \partial_s R \underbrace{v_s^{\text{ref}}}_{=0} \, r \dtheta = \partial_s \q, \\
\int_{C}  \frac{1}{r}\pdel{\theta} v_\theta^{\text{ref}}   r\dr \dtheta
&= \int_{R^1}^{R^2} \int_0^{2\pi} \pdel{\theta} v_\theta^{\text{ref}}   \dtheta \dr =\int_{R^1}^{R^2} \underbrace{v_\theta^{\text{ref}}\vert_{\theta=2\pi}-v_\theta^{\text{ref}}\vert_{\theta=0}}_{= 0} \dr = 0, \\
\int_{C}  \frac{1}{r} \left(  \pdel{r} \left(r v_r^\text{ref} \right)\right) r\dr
&= \int_{0}^{2\pi} \int_{R^1}^{R_2} \pdel{r} \left(r v_r^\text{ref} \right) \dr\dtheta =  \int_{\partial C } R v_r^\text{ref} \dtheta \\
&=\int_{\partial C } R w \dtheta = \vert \partial C  \vert \, \bar{w}.
\end{align*}
For the first term, we used \eqref{eq:reyn-transport-ds} and that $v_s^{\text{ref}}=0$ on $\partial C$ (due to the no-slip boundary condition on $\Gamma$). For the second term,  we used the fundamental theorem of calculus. For the third term, we used $v_r^\text{ref}=w$ on $\Gamma$.

Inserting this in the conservation of mass equation yields
\begin{gather}
\begin{aligned}
\int_{C} \left( \partial_s v_s^{\text{ref}} + \frac{1}{r} \left(  \pdel{r} \left(r v_r^\text{ref} \right)\right) \right) r\dr
&= \partial_s \q - \vert \partial C \vert \bar{w} = 0.  \label{eq:temptemp}
\end{aligned}
\end{gather}
Using that $\partial_t A = \vert \partial C \vert  \bar{w}$, the integrated conservation equation \eqref{eq:temptemp} then reads
\begin{align}
\partial_s \q+ \partial_t A=0. \label{eq:1dconseq}
\end{align}
This equation is well known in the context of blood flow models \cite{olufsen1999structured, formaggia2003one}, and simply states that changing the size of the cross-section will drive a cross-section flux. One may notice that this an exact result, meaning that the reduction holds without any assumptions on $p^{\text{ref}}$ and $v^{\text{ref}}$. 

\subsection{Reduced axial momentum equation}
Recall the assumption \eqref{eq:assumption-v}, stating that 
$v_s^\text{ref}=\hat{v}(s,t) v^{vp}(r,\theta;s,t)$, where $v^{vp}(r,\theta;s,t)$ is the velocity profile associated with the cross-section $C=C(s,t)$. In this section, we show how this assumption can be used to derive a reduced momentum equation. 

The (full) axial momentum equation reads
$v_s^{\text{ref}}$ reads
\begin{align*}
\partial_t v_s^{\text{ref}} - \frac{\nu}{\varphi} \Delta v_s^{\text{ref}} + \frac{\nu}{\kappa} v_s^{\text{ref}} + \partial_s p  &= 0.
\end{align*}
The dimension reduction is performed by integrating this equation over an arbitrary portion $\tilde{\Omega}$ of the annular cylinder, 
\begin{align*}
\int_{\tilde{\Omega}}\left( \partial_t v_s^{\text{ref}} - \frac{\nu }{\varphi} \Delta v_s^{\text{ref}}  + \frac{\nu}{\kappa} v_s^{\text{ref}}  + \partial_s p  \right)\, r \dr\dtheta\ds &= 0, \label{eq:temp-integral}
\end{align*}
and using Reynolds transport theorem to transfer the derivatives out of the integral.

We evaluate the results of \eqref{eq:temp-integral} term by term. For the first term, 
\begin{align*}
\int_{\tilde{\Omega}}   \partial_t v_s^{\text{ref}}  \, r \dr\dtheta\ds 
&= \int_{s_1}^{s_2} \int_{C}  \partial_t   v_s^{\text{ref}}  r\, \dr\dtheta \ds
\\
&= \int_{s_1}^{s_2} \left( \partial_t  \int_{C} v_s^{\text{ref}}  r\, \dr\dtheta   -  \int_{\partial C} \underbrace{v_s^{\text{ref}}}_{=0 \text{ on} \Gamma} w R d\theta \right) \ds  =  \int_{s_1}^{s_2} \partial_t  q  \ds,
\end{align*}
where we used \eqref{eq:reyn-transport-dt} to move the time derivative out of the integral.

For the second term, we apply the divergence theorem, which yields
\begin{align*}
\int_{\tilde{\Omega}} \Delta v_s^{\text{ref}}  \, r\dr\dtheta\ds &= \int_{\partial \tilde{\Omega}}  \nabla v_s^{\text{ref}} \cdot \nn \, \mathrm{d}\boldsymbol{\sigma} \\
&=  \underbrace{\int_{\mathrm{C}(s_2,t)} \partial_s v_s^{\text{ref}} r\dr\dtheta - \int_{\mathrm{C}(s_1,t)} \partial_s v_s^{\text{ref}} r\dr\dtheta}_{\text{top and bottom boundary}} \\ &+ \underbrace{\int_{s_1}^{s_2} \int_{\partial C(s,t)} \nabla v_s^{\text{ref}} \cdot \nn \dtheta \ds}_{\text{inner and outer lateral boundary}},
\end{align*}
where $\mathrm{C}(s_2,t)$ and $\mathrm{C}(s_1,t)$ denote the top and bottom boundaries of $\tilde{\Omega}$. For the top and bottom boundary terms, we compute
\begin{align*}
& \int_{C(s_2,t)} \partial_s v_s^{\text{ref}} \,r\dr\dtheta - \int_{C(s_1,t)} \partial_s v_s^{\text{ref}} \,r\dr\dtheta  \\
&= \int_0^{2\pi} \left( \int_{R^{1}(s_2,\theta,t)}^{R^{2}(s_2,\theta,t)} \partial_s v_s^{\text{ref}}(s_2,t) r \, \dr - \int_{R^{1}(s_1,\theta,t)}^{R^{2}(s_1,\theta,t)} \partial_s v_s^{\text{ref}}(s_1,t) r \, \dr\right) \dtheta  \\
&=\partial_s  \int_0^{2\pi} \left( \int_{R^{1}(s_2,\theta,t)}^{R^{2}(s_2,\theta,t)}  v_s^{\text{ref}}(s_2,t) r \, \dr-  \int_{R^{1}(s_1,\theta,t)}^{R^{2}(s_1,\theta,t)} v_s^{\text{ref}}(s_1,t) r \, \dr \right) \dtheta  \\
&=\partial_s \left( \q(s_2,t) -\q(s_1,t) \right) \\
&=\partial_s \int_{s_1}^{s_2} \partial_s( \q(s,t))\ds \\
&= \int_{s_1}^{s_2} \frac{\partial^2}{\partial s^2} \q(s,t)\ds
\end{align*}
where we moved the partial derivative $\partial_s$ directly out of the integral as the integration domains $C(s_1)$ and $C(s_2)$ do not depend on $s$.

For the lateral boundary terms, we recall the splitting \eqref{eq:assumption-v}, which implies $\nabla v_s^{\text{ref}} = (\partial_r, \partial_\theta, \partial_s) v_s^{\text{ref}} =  (\hat{v}\partial_r v^{vp}, \hat{v}\partial_\theta v^{vp}, v^{vp}\partial_s \hat{v})$. On the inner and outer lateral boundaries, the outward pointing unit normal is perpendicular to the axial direction. Thus $\nabla v_s^{\text{ref}} \cdot \nn = \nabla v^{vp} \cdot \nn$. From this we find
\begin{align*}
\int_{s_1}^{s_2} \int_{\partial C}  \nabla v_s^{\text{ref}} \cdot \nn \,\mathrm{d}\sigma \ds &= \int_{s_1}^{s_2}  \hat{v}\int_{\partial C}  \nabla v^{vp} \cdot \nn \,r\dr\dtheta \ds \\
&= \int_{s_1}^{s_2}  \hat{v}\int_{C} \Delta v^{vp} \,r\dr\dtheta \ds  \\
&= \int_{s_1}^{s_2}  \hat{v} A \overline{\overline{\Delta v^{vp}} } \ds
\end{align*}
where we used the divergence theorem again.

By a straightforward calculation, we have
\begin{align*}
\q &= \int_0^{2\pi} \int_{R^{1}}^{R^{2}} v_s^{\text{ref}} \, r \dr \dtheta= \hat{v}\int_0^{2\pi} \int_{R^{1}}^{R^{2}} v^{vp} \, r \dr \dtheta =  \hat{v} A \bar{\bar{v}}^{vp} \\
 \Rightarrow \hat{v} &= \frac{\q}{A \bar{\bar{v}}^{vp}}.
\end{align*}

Consequently
\begin{align*}
\int_{\tilde{\Omega}}  \frac{\nu}{\varphi}  \Delta v_s^{\text{ref}} \, r\dr\dtheta\ds &= \int_{s_1}^{s_2} \frac{\nu}{\varphi} \frac{\partial^2}{\partial s^2}  \q -  \frac{\nu}{\varphi} \frac{\overline{\overline{\Delta v^{vp}}} }{\bar{\bar{v}}^{vp}} \q  \ds.
\end{align*}

For the third term (the Brinkman term), we have by definition
\begin{align*}
\int_{\tilde{\Omega}}  \frac{\nu}{\kappa} v_s^{\text{ref}} \, r\dr\dtheta\ds = \int_{s_1}^{s_2} \frac{\nu}{\kappa} \q \ds.
\end{align*} 

For the fourth and final term (the pressure term), we assumed $p=p(s,t)$; thus, we simply have
\begin{align*}
\int_{\tilde{\Omega}} \partial_s p \, r\dr\dtheta\ds &=  \int_{s_1}^{s_2} \partial_s p \int_0^{2\pi} \int_{R^{1}}^{R^{2}} 1 \, r\, dr \dtheta \ds = \int_{s_1}^{s_2} \partial_s p  \ds .
\end{align*}

This yields the following integrated momentum equation:
\begin{align*}
\int_{s_1}^{s_2} \partial_t \q - \frac{\nu}{\varphi} \partial_{ss} \q  - \frac{\nu}{\varphi} \frac{\overline{\overline{\Delta v^{vp}} }}{\bar{\bar{v}}^{vp}} \q + \frac{\nu}{\kappa} \q + \partial_s p \ds =0.
\end{align*}

As this holds for arbitrary $s_1, s_2$, we have the following averaged momentum equation for $v_s^{\text{ref}}$:
\begin{align}
\pdel t \q - \frac{\nu}{\varphi} \partial_{ss} \q  + \frac{\nu}{\varphi} \frac{\overline{\overline{\Delta v^{vp}} }}{\bar{\bar{v}}^{vp}} \q  +\frac{\nu}{\kappa} \q + \partial_s p =0.
\end{align}
with
\begin{align}
\mathcal{R}(s,t) = \frac{\nu}{\varphi} \frac{\overline{\overline{\Delta v^{vp}}}}{\bar{\bar{v}}^{vp}} + \frac{\nu}{\kappa}. \label{eq:res-appendix}
\end{align}
In the next section, we show how the velocity profile $v^{vp}$ (and hence the resistance $\mathcal{R}$) can be computed for a cross-section $C$.

\subsubsection{Computing the resistance}

Consider a domain with a single, straight unit length centerline aligned with the $z$-axis and a constant cross-section $C$:
\begin{align*}
\Omega = \{ (r\cos(\theta),r\sin(\theta),s): R^1(\theta)<r<R^2(\theta), 0<s<l \}.
\end{align*}
Next, we assume the flow in this domain is independent of time and driven by some constant pressure drop. Then $p=p(s)$, and we have $\nabla p=(0,0,\partial_s p)=(0,0,-c)$. Inserting this in \eqref{eq:stokes}, we see that the flow $\vv$ is purely axial, i.e. $\vv = (0,0,v_s^{\text{ref}})$. Moreover, from conservation of mass, we have
\begin{align*}
\nabla \cdot \vv = (0,0, \partial_s v_s^{\text{ref}}) = 0 \quad \Rightarrow \quad v_s^{\text{ref}}=v_s^{\text{ref}}(r,\theta).
\end{align*}
In this case, we have the splitting $v^{\text{ref}}=\hat{v}v^{vp}(r,\theta)$, where $\hat{v}$ is a constant. 

Inserting this in the axial component of the momentum equation in the Stokes--Brinkman system \eqref{eq:stokes}, we find
\begin{align}
 - \frac{\nu}{\varphi} \Delta v_s^{\text{ref}} + \frac{\nu}{\kappa}v_s^{\text{ref}} = \frac{c}{\hat{v}}. \label{eq:split}
\end{align}
Notice that the scaling $\hat{v}$ in \eqref{eq:assumption-v}
is arbitrary; we now fix it so that $c/(\nu \hat{v})=1$. The velocity profile associated with a cross-section $C$ can then be obtained by solving
\begin{gather}
\begin{aligned}
- \frac{1}{\varphi} \Delta v^{vp} + \frac{1}{\kappa} v^{vp} &= -1 && \text{ in } C, \\
v^{vp} &= 0 && \text{ on } \partial C. \label{eq:vp-on-C}
\end{aligned}
\end{gather}
Averaging the first equation in \eqref{eq:vp-on-C} yields
\begin{align*}
  \overline{\overline{\Delta v^{vp}}} &= \varphi \left( 1+\frac{1}{\kappa} \overline{\overline{v^{vp}}}\right).
\end{align*}
Inserting this in \eqref{eq:res-appendix}, we find
\begin{align}
\mathcal{R}(s,t) = \frac{\nu}{\q^{vp}} + 2 \frac{\nu}{\kappa},
\end{align}
where $\q^{vp}=\bar{\bar{v}}^{vp}$ is the velocity profile cross-section flux.

\begin{remark}
For open channels ($\kappa\rightarrow \infty$), this result agrees with the one derived in  \cite{tithof2019hydraulic}. Specifically, they non-dimensionalize the Stokes equations to derive the resistance 
\begin{align}
    \mathcal{R} = \frac{\nu}{q^{vp}} \frac{1}{(R^1)^4 }.
\end{align}
\end{remark}

\subsubsection{Reduced boundary and bifurcation conditions}

To derive the reduced boundary conditions, we simply average the traction boundary condition \eqref{eq:boundary-condition} over the cross-section. This yields
\begin{align}
\frac{1}{A} \int_{C} ( \frac{\nu}{\varphi} \partial_s v_s^{\text{ref}}-p^{\text{ref}})\, r \dr = \frac{1}{A} \left( \frac{\nu}{\phi} \q - p\right) =  \tilde{p}^{\text{ref}}.
\end{align}

\end{document}